\newcommand*{\textlabel}[2]{%
  \edef\@currentlabel{#1}
  \phantomsection
  #1\label{#2}
}
\newtheorem{theorem}{Theorem}[section]
\newtheorem{lemma}[theorem]{Lemma}
\newtheorem{proposition}[theorem]{Proposition}
\newtheorem{corollary}[theorem]{Corollary}
\newtheorem*{claim}{Claim}
\theoremstyle{definition}
\newtheorem{definition}[theorem]{Definition}
\newtheorem*{ack}{Acknowledgements}
\newtheorem{remark}[theorem]{Remark}
\newtheorem{question}[theorem]{Question}
\numberwithin{equation}{section}
\newcommand {\vab}[1]{\mathcal{F}_{#1}}
\newcommand {\gd}[2]{\mathrm{gd}_{#1}{#2}}
\newcommand {\cd}[2]{\mathrm{cd}_{#1}{#2}}
\newcommand {\ucd}[1]{\underline{\mathrm{cd}}{#1}}
\newcommand {\ee}[2]{E_{#1}{#2}}
\newcommand {\mc}[1]{\mathcal{#1}}
\newcommand {\cze}{\mathrm{CAT}(0)}
\newcommand{\ue}{\underline{E}}
\newcommand{\uue}{ \underline{\underline{E}}}
\newcommand{\conc}{\mathrm{(C)}}
\begin{document}

\title[Bredon dimension for CAT(0) groups]{Bredon cohomological dimension for virtually abelian stabilisers for CAT(0) groups}

\author[T. Prytu{\l}a]{Tomasz Prytu{\l}a}

\address{Max Planck Institute for Mathematics, Vivatsgasse 7, 53111 Bonn, Germany}

\email{prytula@mpim-bonn.mpg.de}

\subjclass[2000]{Primary 20F65, 20F67; Secondary 20J05}

\keywords{Classifying space for a family of subgroups, Bredon cohomological dimension, geometric dimension, CAT(0) group}

\begin{abstract}Given a discrete group $G$, for any integer $r\geqslant0$ we consider the family of all virtually abelian subgroups of $G$ of rank at most $r$. We give an upper bound for the Bredon cohomological dimension of $G$ for this family for a certain class of groups acting on $\cze$ spaces. This covers the case of Coxeter groups, Right-angled Artin groups, fundamental groups of special cube complexes and graph products of finite groups.
Our construction partially answers a question of J.-F.\ Lafont.
\end{abstract}

\maketitle

\section{Introduction}\label{sec:intro}

Given a group $G$ and a family of its subgroups $\mc{F}$, a classifying space $E_{\mc{F}}G$ is a terminal object in the homotopy category of $G$--CW--complexes with stabilisers in $\mc{F}$. Such a classifying space always exists and any two models for $E_{\mc{F}}G$ are $G$--homotopy equivalent. The two most prominent families are that of all finite and of all virtually cyclic subgroups of $G$, where the corresponding classifying spaces are commonly denoted by $\ue G$ and $\uue G$ respectively. These spaces appear in the formulation of Baum-Connes and Farrell-Jones conjecture respectively, and they have been intensively studied over the last decade \cite{Lusurv}. A particular emphasis has been put on constructing simple models for $E_{\mc{F}}G$, meaning of the possibly small dimension and cell structure.
The minimal dimension of a model for $E_{\mc{F}}G$ is called the \emph{geometric dimension of $G$ for the family $\mathcal{F}$} and is denoted by $\gd{\mc{F}}{G}$. There is an algebraic counterpart of geometric dimension, which is called the \emph{Bredon cohomological dimension} and is denoted by $\cd{\mc{F}}{G}$. These dimensions satisfy the following inequality
\[\cd{\mc{F}}{G} \leqslant \gd{\mc{F}}{G} \leqslant \mathrm{max} \{3,\cd{\mc{F}}{G}\}.\]
Therefore to show the existence of a model for $E_{\mc{F}}G$ of a given dimension, it is enough to bound the Bredon cohomological dimension, which sometimes turns out to be an easier task. Finite-dimensional models for the families of finite and virtually cyclic groups have been constructed for many classes of groups, particularly for numerous non-positively curved groups \cite{DegPe,Lusurv,Lucze, OsaPry}. 

More recently, as a natural generalisation of families of finite and of virtually cyclic subgroups, the family of virtually abelian subgroups has been considered. For any $r>0$ let $\vab{r}$ denote the family of all subgroups of $G$ that are finitely generated virtually abelian of rank at most $r$. Constructing finite-dimensional models for $\ee{\vab{r}}{G}$ for $r>1$ is substantially harder. To date, very few constructions have been performed, and all of them use a very simple structure of this family. In [OP16] together with D.\ Osajda we have constructed finite-dimensional models for $\ee{\vab{2}}{G}$ where $G$ is a group acting properly on either a systolic complex or a $\cze$ space with no flats of dimension higher than $2$. In \cite{CMNP} there is a construction of a finite-dimensional model for $\ee{\vab{r}}{G}$ where $G$ is finitely generated abelian group.

In the current paper, building on the above constructions and the results of Degrijse-Petrosyan \cite{DegPe}, we give an upper bound for the dimension $\cd{\vab{r}}{G}$ for a certain class of groups acting on $\cze$ spaces.
Recall that a $\cze$ space is a simply connected geodesic metric space of non-positive metric curvature (see \cite{BH} for a detailed treatment). 

\begin{theorem}\label{thm:intromain}Let $G$ be a group acting properly by semi-simple isometries on a complete proper $\mathrm{CAT}(0)$ space of topological dimension $n$. Suppose additionally that $G$ satisfies condition $\conc$. Then for any $0 \leqslant r \leqslant n$ we have 
\[\cd{\vab{r}}G \leqslant n+r+1.\]
\end{theorem}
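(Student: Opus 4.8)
The plan is to bound $\cd{\vab{r}}{G}$ by combining a general pushout/recursion technique for families of subgroups with the structural input coming from the $\cze$ action. The overall strategy is to build a model for $\ee{\vab{r}}{G}$ (or rather to estimate its Bredon cohomological dimension) out of a model for the smaller family $\vab{r-1}$ together with information about how the maximal rank-$r$ virtually abelian subgroups sit inside $G$. This is the standard inductive approach to constructing classifying spaces for nested families, and condition $\conc$ should be precisely what makes the inductive step controllable.

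\smallskip

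First I would set up the induction on $r$. The base case $r=0$ concerns the family of finite subgroups, where $\cd{\vab{0}}{G} = \ucd{G} \leqslant n$ follows from the properness of the action on an $n$--dimensional $\cze$ space (the space itself, after a standard equivariant triangulation or as a model, realises $\ue G$ in dimension $n$). For the inductive step I would pass from $\vab{r-1}$ to $\vab{r}$. The key algebraic tool is a Bredon-cohomological pushout (a Mayer--Vietoris type argument, as developed by Lück--Weiermann and used by Degrijse--Petrosyan) which relates $\cd{\vab{r}}{G}$ to $\cd{\vab{r-1}}{G}$ and to the Bredon dimensions of the normalisers $N_G[A]$ of the maximal rank-$r$ subgroups $A$, relative to the subfamily of $N_G[A]$ consisting of subgroups of $A$-bounded type. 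Concretely, I expect an inequality of the shape
\[\cd{\vab{r}}{G} \leqslant \max\left\{\cd{\vab{r-1}}{G},\; \sup_{[A]} \left(\cd{\mc{F}'}{N_G[A]} + 1\right)\right\},\]
where the $+1$ comes from the mapping-cylinder/pushout construction and $\mc{F}'$ is the relative family on the normaliser.

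\smallskip

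The heart of the argument is therefore to analyse the normaliser term. Here the $\cze$ geometry enters: a maximal virtually abelian subgroup $A$ of rank $r$ acting by semisimple isometries stabilises (by the flat torus theorem / its $\cze$ analogue) a convex subset isometric to $\mathbb{R}^r \times Y$, and the normaliser $N_G[A]$ acts on this minimal set, permuting the flats. Splitting off the $\mathbb{R}^r$ factor lets me write $N_G[A]$ as a virtually $(\mathbb{Z}^r \rtimes Q)$--type extension, and the relative classifying space can be built from a model for the quotient acting on $Y$ of dimension at most $n-r$, crossed appropriately with the rank-$r$ abelian part. Invoking the result on finitely generated abelian groups from \cite{CMNP} (to handle the $\mathbb{Z}^r$ direction, contributing roughly $2r$ in general but here trimmed by the flat being top-dimensional) together with the Degrijse--Petrosyan machinery should yield a bound of the form $\cd{\mc{F}'}{N_G[A]} \leqslant n+r$, and the extra $+1$ from the pushout gives the claimed $n+r+1$.

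\smallskip

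The main obstacle I anticipate is exactly the control of the normaliser dimension: one must show that condition $\conc$ guarantees the flats are organised so that the normaliser has a well-behaved splitting with a complement acting on an $(n-r)$--dimensional $\cze$ space, and that the ``bounded type'' relative family on the $\mathbb{Z}^r$ factor contributes only the expected amount. Bookkeeping the dimensions so they sum to exactly $n+r+1$ rather than something larger — in particular ensuring that the top-dimensional flat constraint $r \leqslant n$ prevents any dimension overshoot — is where the argument will be most delicate, and is presumably where condition $\conc$ (whatever its precise statement, which I have not seen here) is tailored to do its work.
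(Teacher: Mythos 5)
Your overall strategy coincides with the paper's: induction on $r$, the L\"uck--Weiermann pushout to pass from $\vab{r-1}$ to $\vab{r}$, the Flat Torus Theorem to split off an $\mathbb{R}^r$ factor and reduce the normaliser to a group acting on an $(n-r)$--dimensional $\cze$ space, and the virtually abelian case (\`a la \cite{CMNP}) to handle the rank--$r$ direction. However, there are two concrete gaps. First, your base case is stated incorrectly: a proper $n$--dimensional $\cze$ space acted on by $G$ is \emph{not} in general ($G$--equivariantly triangulable as) an $n$--dimensional model for $\ue G$ --- the nerve of an equivariant cover can raise the dimension (e.g.\ from $2$ to $3$). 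This is precisely why the entire argument must be run with Bredon cohomological dimension rather than geometric dimension, using the Degrijse--Petrosyan result $\cd{\vab{0}}{G}\leqslant n$; your parenthetical ``after a standard equivariant triangulation'' would break the bound already at $r=0$.

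Second, and more seriously, the step you flag as ``the main obstacle'' is indeed the missing idea, and it does not resolve the way you suggest. The L\"uck--Weiermann construction attaches, for each class $[H]$, the \emph{commensurator} $N_G[H]=\{g\mid \mathrm{rk}(g^{-1}Hg\cap H)=r\}$, not the normaliser $N_G(H)$. The Flat Torus Theorem only controls $N_G(H)$ (the normaliser preserves the minimal set $Y\times\mathbb{R}^r$); the commensurator need not preserve any flat and can be much larger, so there is no ``well-behaved splitting'' of $N_G[H]$ to be had. The actual mechanism is: condition $\conc$ says every finitely generated $K\subset N_G[H]$ lies in $N_G(H')$ for some $H'\sim H$, and then a colimit lemma over finitely generated subgroups converts the uniform bound $\cd{\mathcal{ALL}[H]\cap N_G(H')}{N_G(H')}\leqslant n-r$ into $\cd{\mathcal{ALL}[H]}{N_G[H]}\leqslant n-r+1$, at the cost of the $+1$. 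One also needs to split the relative family on $N_G[H]$ as the union $\mathcal{ALL}[H]\cup(\vab{r-1}\cap N_G[H])$ and bound the three resulting terms ($n-r+1$, $n+r$ by induction, and $n+r$ for the intersection via the virtually abelian proposition) separately; your single relative family $\mc{F}'$ with a uniform bound $n+r$ does not account for where each constant comes from, and your placement of the $+1$ on the normaliser term rather than on the $\cd{\vab{r-1}}{G}$ term does not match the pushout inequality. None of these is fatal to the strategy, but as written the proof of the inductive step is not complete.
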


The condition $\conc$ is a higher-rank analogue of L\"uck's condition $\conc$ for cyclic groups. Roughly speaking, it says that commensurators of elements of $\vab{r}$ can be approximated by their normalisers.  
This condition is satisfied for example by linear groups.
The following is our main application of Theorem~\ref{thm:intromain}.

\begin{corollary}\label{cor:introcor}Let $G$ be as in Theorem~\ref{thm:intromain} and suppose it is linear over $\mathbb{Z}$. Then for any $0 \leqslant r \leqslant n$ we have 
\[\cd{\vab{r}}G \leqslant n+r+1.\]
\end{corollary}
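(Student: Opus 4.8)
The plan is to obtain Corollary~\ref{cor:introcor} as a direct specialisation of Theorem~\ref{thm:intromain}. The group $G$ already satisfies every geometric hypothesis of the theorem, so the sole remaining task is to replace the abstract hypothesis $\conc$ by the concrete assumption of $\mathbb{Z}$-linearity. Thus the argument splits into two parts: first, the purely formal observation that the conclusion of the theorem applies verbatim once $\conc$ is known to hold; and second, the genuinely substantive step of verifying that every group linear over $\mathbb{Z}$ satisfies $\conc$. Everything of interest lies in the latter implication.

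To verify $\conc$, I would unwind its defining requirement: each $A \in \vab{r}$ should possess, within its commensurability class, a canonical representative whose normaliser controls the commensurator $\mathrm{Comm}_G(A)$, up to finite index. The natural source of such a canonical representative in a linear group is the Zariski closure. Concretely, I would first invoke Selberg's lemma to pass to a torsion-free finite-index subgroup, reducing to the case of a free abelian $A$ of rank at most $r$; then form the Zariski closure $\overline{A}$ inside the ambient algebraic group and observe that its identity component $\overline{A}^{\,0}$ depends only on the commensurability class of $A$, since any element commensurating $A$ must conjugate $\overline{A}^{\,0}$ to itself. From $\overline{A}^{\,0}$ one extracts a maximal virtually abelian subgroup $A_{\max} \leqslant G$ of rank equal to that of $A$, with the property that $\mathrm{Comm}_G(A)$ and $N_G(A_{\max})$ agree up to finite index; this realises the commensurator as an approximate normaliser, which is precisely what $\conc$ demands.

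The main obstacle, as I see it, will be to match this structural picture with the exact quantitative form of $\conc$ needed to feed into the Bredon-cohomological machinery adapted from Degrijse--Petrosyan \cite{DegPe}. Two points require care. First, one must control the index $[\mathrm{Comm}_G(A) : N_G(A_{\max})]$ and check that the approximation is of the uniform, bounded type that the condition requires, rather than a merely set-theoretic containment. Second, the canonical representative $A_{\max}$ must be chosen coherently across the infinitely many commensurability classes appearing in $\vab{r}$, so that the resulting construction is equivariant. Linearity over $\mathbb{Z}$ is precisely what supplies both features, through the rigidity of Zariski closures together with the finiteness properties of linear groups; once these index and coherence issues are settled, $\conc$ holds and Theorem~\ref{thm:intromain} yields $\cd{\vab{r}}{G} \leqslant n+r+1$ directly.
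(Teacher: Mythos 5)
Your top-level structure agrees with the paper: the corollary is Theorem~\ref{thm:intromain} combined with the single substantive claim that a group linear over $\mathbb{Z}$ satisfies condition~$\conc$ (the paper's Lemma~\ref{lem:conditionc}). But your proposed verification of $\conc$ via Zariski closures has a genuine gap, and it also aims at a statement that is simultaneously stronger than what the condition requires and weaker where it matters. Condition~$\conc$ asks: for every $H\in\vab{r}$ and every \emph{finitely generated} $K\subset N_G[H]$, there is some $H'\in\vab{r}$ with $H\sim H'$ (so $H'$ has rank exactly $r$ and $\mathrm{rk}(H\cap H')=r$) and $\langle K,H\rangle\subset N_G(H')$; crucially $H'$ may depend on $K$, and no index bounds, no uniformity, and no coherence across commensurability classes are involved. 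The two ``obstacles'' you flag (bounding $[\mathrm{Comm}_G(A):N_G(A_{\max})]$ and choosing representatives equivariantly) therefore address requirements the paper does not impose, while the actual requirement --- that the witness lie in the correct equivalence class inside $\vab{r}$ --- is where your construction fails. The subgroup $A_{\max}=G\cap\overline{A}^{\,0}$ is indeed normalised by the whole commensurator, but it need not have rank equal to $\mathrm{rk}(A)$: the identity component of the Zariski closure of a rank-$s$ abelian group can have dimension strictly larger than $s$. For instance, embed $G\cong\mathbb{Z}^2$ into $\mathrm{GL}_3(\mathbb{Z})$ as the torsion-free part of the unit group of a totally real cubic field acting on its ring of integers; for a suitable unit $u$ the only multiplicative relation among its Galois conjugates is the norm relation, so $\overline{\langle u\rangle}^{\,0}$ is the full two-dimensional norm torus and $G\cap\overline{\langle u\rangle}^{\,0}=G$ has rank $2$, while $\langle u\rangle$ has rank $1$. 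Such an $A_{\max}$ does not satisfy $A_{\max}\sim A$ in $\vab{r}\setminus\vab{r-1}$, so it cannot serve as the $H'$ demanded by $\conc$, and your sketch offers no repair.

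The paper's route is different and avoids Zariski closures entirely. Given $H\in\vab{r}$ (assumed abelian after passing to a finite-index subgroup) and a finitely generated $K\subset N_G[H]$, the group $\langle K,H\rangle$ is a finitely generated group linear over $\mathbb{Z}$, so $H$ is separable in it by a theorem of Segal; since $H$ is also commensurated in $\langle K,H\rangle$, the result of Caprace--Kropholler--Reid--Wesolek produces a finite-index subgroup $H'\subset H$ that is normal in $\langle K,H\rangle$. Being of finite index in $H$, this $H'$ automatically satisfies $H'\in\vab{r}$ and $H'\sim H$, which is exactly what $\conc$ asks for. Note that this $H'$ genuinely depends on $K$ and may shrink as $K$ grows; this is precisely why the condition is formulated per finitely generated subgroup of the commensurator rather than for the commensurator as a whole, and why your attempt to manufacture one canonical representative normalised by all of $N_G[H]$ is both unnecessary and, as the example above shows, not achievable by intersecting $G$ with the Zariski closure.
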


Among groups satisfying the above assumptions are Coxeter groups, Right-angled Artin groups, the so-called special groups (fundamental groups of special cube complexes) and graph products of finite groups.

Unlike L\"{u}ck's condition, our condition $\conc$ is not satisfied by all $\cze$ groups \cite{LM}. In the forthcoming paper with J.\ Huang \cite{HuPr}, we show condition $\conc$ for several classes of $\cze$ groups, including $\cze$ cubical groups. It is conceivable that it holds for more general classes of $\cze$ groups, such as biautomatic $\cze$ groups.

We remark that Theorem~\ref{thm:intromain} partially answers a question of J.-F.\ Lafont \cite[Problem 46.7]{Gui}. Namely, Lafont inquired whether a 'good structure' of (virtually) abelian subgroups of a given group $G$ enables one to construct models for $\ee{\vab{r}}{G}$ more easily than models for $\ee{\vab{1}}{G}=\uue G$. While we use the specific structure of abelian subgroups to bound $\cd{\vab{r}}{G}$ (and thus we obtain a model for $\ee{\vab{r}}{G}$), our construction is an inductive procedure that takes into account abelian subgroups of all ranks lower than $r$, and therefore is by no means easier. Our main tool that is used to study virtually abelian groups of $G$ is the so-called Flat Torus Theorem, which is one of the fundamental theorems in the theory of $\cze$ groups. 

As a step in our construction we first prove Theorem~\ref{thm:intromain} in a very special case where $G$ is virtually abelian, thus generalising the main result of \cite{CMNP}. 

\begin{proposition}\label{prop:introabab}Let $G$ be a finitely generated virtually abelian group of rank $n \geq 0$. Then for any $0 \leqslant r \leqslant n-1$ we have \[\mathrm{gd}_{\vab{r}}G \leqslant n+r.\]
\end{proposition}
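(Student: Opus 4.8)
The plan is to induct on $r$, passing from the family $\vab{r-1}$ to $\vab{r}$ by means of the Lück--Weiermann pushout technique in the form developed by Degrijse--Petrosyan \cite{DegPe}, and to control the commensurator term that appears by exploiting the product splitting furnished by the Flat Torus Theorem \cite{BH}. For the base case $r=0$ the family $\vab{0}$ is the family of finite subgroups, and since $G$ is finitely generated virtually abelian of rank $n$ it acts properly cocompactly by isometries on $\mathbb{E}^n$, with translation lattice $A\cong\mathbb{Z}^n$ of finite index; this realises $\mathbb{E}^n$ as an $n$-dimensional model for $\ue G$, so $\gd{\vab{0}}{G}\leqslant n$ as required.

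For the inductive step I fix $r\geqslant 1$ and declare two rank-$r$ subgroups of $G$ equivalent when they are commensurable. This relation is conjugation invariant and coarsens inclusion among rank-$r$ subgroups, so the Lück--Weiermann machine applies with the isotropy of a class $[H]$ being the commensurator $N_G[H]=\mathrm{Comm}_G(H)$. The first key observation is that, because $G$ is virtually abelian, $\mathrm{Comm}_G(H)$ contains $A$ and is therefore a finite-index subgroup $L\leqslant G$, again virtually abelian of rank $n$. The pushout then reduces the estimate to three contributions: $\gd{\vab{r-1}}{G}$ and $\gd{\vab{r-1}}{L}$, both bounded by $n+r-1$ via the inductive hypothesis (the latter entering the estimate with an extra $+1$ from the mapping cylinder), together with $\gd{\vab{r}[H]}{L}$, where $\vab{r}[H]$ denotes the subgroups of $L$ that are either of rank $\leqslant r-1$ or commensurable with $H$. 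Thus everything comes down to proving $\gd{\vab{r}[H]}{L}\leqslant n+r$.

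To analyse $L$ I invoke the Flat Torus Theorem. Writing $V$ for the rational subspace spanned by $H\cap A$ and $A_V=A\cap V\cong\mathbb{Z}^r$, the subgroup $A_V$ is normal in $L$ (elements of $L$ preserve $V$), and $Q=L/A_V$ is virtually abelian of rank $n-r$ with quotient map $\pi\colon L\to Q$. Geometrically $L$ preserves an orthogonal splitting $\mathbb{E}^n=\mathbb{E}^r\times\mathbb{E}^{n-r}$ in which $A_V$ acts cocompactly on the first factor and trivially on the second, so $L$ acts on $\mathbb{E}^{n-r}$ through the proper cocompact $Q$-action; this exhibits $\mathbb{E}^{n-r}$ as a model of dimension $n-r$ for the pulled-back family $\mathcal{VP}=\{K\leqslant L:\pi(K)\text{ finite}\}$. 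The point is that $\vab{r}[H]=\vab{r-1}(L)\cup\mathcal{VP}$, so I feed these two families into the homotopy pushout attached to a union of families, which bounds $\gd{\vab{r}[H]}{L}$ by the maximum of $\gd{\vab{r-1}}{L}$, of $\gd{\mathcal{VP}}{L}=n-r$, and of $\gd{\vab{r-1}(L)\cap\mathcal{VP}}{L}+1$.

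The main obstacle, and the only place needing more than bookkeeping, is the intersection family $\vab{r-1}(L)\cap\mathcal{VP}$ of rank-$\leqslant r-1$ subgroups lying in the direction $V$. I expect to bound its geometric dimension by $n+r-1$ by constructing an explicit product model $\mathbb{E}^{n-r}\times Y$, in which the first factor accounts for the condition that $\pi(K)$ be finite and $Y$ is an $L$-equivariant model of dimension $2r-1$ for the rank-$\leqslant r-1$ subgroups of $A_V\cong\mathbb{Z}^r$; such a $Y$ should follow from the inductive hypothesis applied to $\mathbb{Z}^r$ together with the fact that $L$ acts on $A_V$ through a finite subgroup of $\mathrm{GL}_r(\mathbb{Z})$, since $A$ centralises $A_V$. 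Granting this, the union pushout yields $\gd{\vab{r}[H]}{L}\leqslant\max\{n+r-1,\,n-r,\,(n+r-1)+1\}=n+r$, and substituting back into the Lück--Weiermann estimate closes the induction. The delicate points to watch are that the naive join of $\vab{r-1}(L)$ and $\mathcal{VP}$ is too wasteful dimensionally, which is what forces the use of the intersection pushout; the equivariant construction of $Y$; and the dimension accounting when $r$ is close to $n$.
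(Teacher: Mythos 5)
Your proposal follows essentially the same route as the paper: induction on $r$, the L\"uck--Weiermann pushout for $\vab{r-1}\subset\vab{r}$ with commensurability as the strong equivalence relation, the observation that $N_G[H]$ is again virtually $\mathbb{Z}^n$ and contains a normal representative $H'\sim H$ (you take $A\cap V$; the paper intersects the finitely many $N_G[H]$--conjugates of $H\cap\mathbb{Z}^n$), the identification of $\mathcal{ALL}[H]$ with the subgroups having finite image in $N_G[H]/H'$ to get the bound $n-r$, and the union-of-families estimate with the same numerology. The one place you stop short of a proof is the intersection family $\mathcal{ALL}[H]\cap\vab{r-1}$: you propose an explicit product model $\mathbb{E}^{n-r}\times Y$ and rightly flag the $L$--equivariant construction of $Y$ as delicate --- $Y$ would need to be an $L$--CW--complex whose $K$--fixed sets are controlled for every $K$ with finite image in $Q$, not only for subgroups of $A_V$, and a naive product does not obviously deliver this. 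The paper replaces that step by the abstract Lemma~\ref{lem:passingtosmaller} applied to the inclusion $\mathcal{ALL}[H]\cap\vab{r-1}\subset\mathcal{ALL}[H]$, which yields $\gd{\mathcal{ALL}[H]\cap\vab{r-1}}{N_G[H]}\leqslant(n-r)+(2r-1)=n+r-1$ from exactly the two inputs you identify (the $(n-r)$--dimensional model for $\mathcal{ALL}[H]$ and the $(2r-1)$--dimensional bound for $\gd{\vab{r-1}}{F}$ coming from the inductive hypothesis applied to rank--$r$ members $F$); substituting this lemma for your ``granting this'' closes the argument.
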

Note that in this case we obtain a slightly better dimension bound, and also our construction here is purely topological (i.e.,\ we give a concrete construction of a model for $\ee{\vab{r}}{G}$).

We would like to point out that an even more general family of \emph{polycyclic} stabilisers is considered in \cite{VM}. In this case the rank is replaced by the \emph{Hirsch length}, and the inductive approach is similar to ours and the one in \cite{CMNP}. Since a $\cze$ group is polycyclic if and only if it is virtually abelian \cite[Theorem~II.7.16]{BH}, our construction overlaps with \cite{VM} only in Proposition~\ref{prop:introabab}.

\begin{ack}I would like to thank Victor Moreno for useful discussions, and particularly for pointing out that by using Lemma~\ref{lem:modelunionoffam} one can obtain much better dimension bounds. I also thank Ian Leary, Ashot Minasyan and Nansen Petrosyan for helpful discussions.

The author was supported by the EPSRC First Grant EP/N033787/1 and by (Polish) Narodowe Centrum Nauki, grant no.~UMO-2015/18/M/ST1/00050.
\end{ack}

\section{Preliminaries}\label{sec:prelim}

In this section we recall some basic notions and we present an inductive procedure of constructing the classifying space $\ee{\vab{r}}{G}$. We also briefly discuss the Bredon cohomological counterpart of that procedure, which we use in Section~\ref{sec:geomdimcat0} to prove Theorem~\ref{thm:mainthm}. Our exposition focuses on the topological construction, since we believe that it gives the reader more insight and is somewhat easier to picture.

\subsection{Classifying spaces for families of subgroups}\label{subsec:classspace}

Let $G$ be a discrete group. A \emph{family of subgroups} $\mathcal{F}$ of $G$ is a collection of subgroups that is closed under taking subgroups and under conjugation by elements of $G$. 

\begin{definition}A $G$--CW--complex $X$ is a \emph{model for the classifying space} $E_{\mathcal{F}}G$ if for any subgroup $H \subset G$ the fixed point set $X^H$ is contractible if $H\in \mathcal{F}$ and empty otherwise.

The \emph{geometric dimension for the family $\mathcal{F}$} denoted by $\mathrm{gd}_{\mathcal F}G$ is the minimal dimension of a model for the classifying space $E_{\mathcal{F}}G$. 
\end{definition}
The classifying space always exists and has the following universal property. Let $Y$ be any $G$--space with stabilisers in $\mathcal{F}$. Then there exists a $G$--map $Y \to E_{\mathcal{F}}G$ which is unique up to $G$--homotopy. In particular, any two models for $E_{\mathcal{F}}G$ are $G$--homotopy equivalent (see \cite{Lusurv}). However, general constructions always yield infinite dimensional CW--complexes. The standard (by now) method to construct finite-dimensional models for $E_{\mathcal{F}}G$ is the construction of L\"{u}ck and Weiermann \cite{LuWe}, which we will now briefly recall. 

We need a piece of notation. Given a group $G$, a family of its subgroups $\mathcal{F}$ and a subgroup $K \subset G$, let $\mc{F} \cap K$ denote the collection of subgroups of $\mc{F}$ that belong to $K$. One easily verifies that $\mc{F} \cap K$ is a family of subgroups of $K$.

Now let $\mathcal{F}$ and $\mathcal{G}$ be two families of subgroups of $G$ such that $\mathcal{F} \subset \mathcal{G}$. Suppose that we have an equivalence relation $\sim$ on the set of subgroups $\mathcal{G} \setminus \mathcal{F}$ which satisfies the following two conditions:

\begin{itemize}
	\item for any $H, H' \in \mathcal{G} \setminus \mathcal{F}$ if $H' \subset H$ then $H \sim H'$,
	\item for every $g \in G$ we have $H \sim H'$ if and only if $g^{-1}Hg \sim g^{-1}H'g$.
\end{itemize}
Such a relation is called a \emph{strong equivalence relation}.
Let $[H]$ denote the equivalence class of $H$ and let $[\mathcal{G} \setminus \mathcal{F}]$ denote the set of equivalence classes of elements of $\mathcal{G} \setminus \mathcal{F}$. Now given $[H] \in [\mathcal{G} \setminus \mathcal{F}]$ let $N_G[H]$ be a subgroup of $G$ defined as follows
\[N_G[H]=\{ g \in G \mid g^{-1}Hg \sim H\}.\]
Define the family of subgroups $\mathcal{G}[H]$ of $N_G[H]$ by
\[\mathcal{G}[H] = \{H' \in \mathcal{G} \setminus \mathcal{F} \mid H' \sim H\} \cup (\mathcal{F} \cap N_G[H]).\]
The following is one of the main theorems of \cite{LuWe}.

\begin{theorem}{\cite[Theorem 2.3]{LuWe}} Let $I$ be the complete set of representatives of $G$--conjugacy classes of elements of $[\mathcal{G} \setminus \mathcal{F}]$. Suppose we have a model for $E_{\mathcal{F}}G$ and for any $[H] \in I$ we have models for $E_{\mathcal{F} \cap N_G[H]}N_G[H]$ and $E_{\mathcal{G}[H]}N_G[H]$.
Let $X$ be given by a cellular $G$--pushout
\[
\begin{CD}
\coprod_{[H] \in I}  G \times_{N_G[H]} E_{\mathcal{F} \cap N_G[H]} N_G[H]     @>i>>  E_{\mathcal{F}}G \\
@VV{\coprod_{[H] \in I} \mathrm{id}_G \times_{N_G[H]} f_{[H]}}V         @VVV\\
\coprod_{[H] \in I}  G \times_{N_G[H]} E_{\mathcal{G}[H]} N_G[H]     @>{\phantom{\text{label}}}>>   X,
\end{CD}
\]
where $i$ is a $G$--$\mathrm{CW}$--inclusion and every $f_{[H]}$ is a cellular $N_G[H]$--map. Then $X$ is a model for $E_{\mathcal{G}}G$.
\end{theorem}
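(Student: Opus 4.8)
The plan is to verify that $X$ satisfies the fixed-point characterisation of a model for $E_{\mathcal{G}}G$: for every subgroup $K \subseteq G$ one must have $X^K$ contractible if $K \in \mathcal{G}$, and $X^K = \emptyset$ otherwise. The starting observation is that taking $K$-fixed points commutes with a cellular $G$-pushout whose upper horizontal arrow $i$ is a $G$-CW-inclusion: applying $(-)^K$ to the square yields a pushout of spaces in which $i^K$ is again a cofibration, so $X^K$ is the homotopy pushout of the $K$-fixed points of the remaining three corners. To evaluate those I would use the standard description of the fixed points of an induced $G$-space, writing $N = N_G[H]$,
\[
\bigl(G \times_{N} Y\bigr)^K \;\cong\; \coprod_{\substack{g \in K\backslash G/N \\ g^{-1}Kg \subseteq N}} Y^{\,g^{-1}Kg},
\]
together with the fact that for a model $E_{\mathcal{H}}L$ and a subgroup $L' \subseteq L$ the set $(E_{\mathcal{H}}L)^{L'}$ is contractible when $L' \in \mathcal{H}$ and empty otherwise.

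I would then split into three cases according to where $K$ lies. If $K \notin \mathcal{G}$, then every conjugate $g^{-1}Kg$ lies outside $\mathcal{G}$ and hence outside each of the families $\mathcal{F}$, $\mathcal{F}\cap N_G[H]$ and $\mathcal{G}[H] \subseteq \mathcal{G}$; by the two facts above all four corners have empty $K$-fixed points, so $X^K = \emptyset$. If $K \in \mathcal{F}$, then whenever $g^{-1}Kg \subseteq N_G[H]$ one has $g^{-1}Kg \in \mathcal{F}\cap N_G[H] \subseteq \mathcal{G}[H]$, so every $f_{[H]}$ is a homotopy equivalence on the relevant fixed-point sets; consequently the left-hand vertical arrow induces a homotopy equivalence on $K$-fixed points, and since $i^K$ is a cofibration the gluing lemma gives $X^K \simeq (E_{\mathcal{F}}G)^K$, which is contractible because $K \in \mathcal{F}$.

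The essential case — the one I expect to carry the real content — is $K \in \mathcal{G} \setminus \mathcal{F}$, where both $(E_{\mathcal{F}}G)^K$ and the upper-left corner are empty (as $K$ and its conjugates avoid $\mathcal{F}$), so everything is forced by the lower-left corner. A component $Y^{\,g^{-1}Kg}$ with $Y = E_{\mathcal{G}[H]}N_G[H]$ is nonempty exactly when $g^{-1}Kg \sim H$, which requires $[H]$ to be $G$-conjugate to $[K]$; since the representatives in $I$ are pairwise non-conjugate, precisely one index $[H] \in I$ contributes, and it does occur because $g^{-1}Kg \sim H$ forces $g^{-1}Kg \subseteq N_G[H]$. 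The crux is to show that for this $[H]$ exactly one double coset $KgN_G[H]$ survives: if $g^{-1}Kg \sim H$ and $g'^{-1}Kg' \sim H$, then setting $b = g^{-1}g'$ and applying the conjugation-invariance of $\sim$ to $g^{-1}Kg \sim H$ yields $g'^{-1}Kg' \sim b^{-1}Hb$, whence $b^{-1}Hb \sim H$ and therefore $b \in N_G[H]$ by definition of the latter; thus $g' \in gN_G[H]$ and the double coset is unique. Consequently $X^K$ is the homotopy pushout of $\emptyset \leftarrow \emptyset \rightarrow Y^{\,g^{-1}Kg}$ with a single contractible right-hand term, hence contractible.

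The main obstacle is exactly this uniqueness argument, since it is the only place where both axioms of a strong equivalence relation and the definition of $N_G[H]$ are used simultaneously; once it is in hand, the remaining cases reduce to bookkeeping with the two fixed-point facts and the gluing lemma for homotopy pushouts.
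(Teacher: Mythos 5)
The paper does not prove this statement; it is imported verbatim as \cite[Theorem~2.3]{LuWe}, so there is no internal proof to compare against. Your argument is correct and reconstructs the original L\"uck--Weiermann proof essentially as given there: the fixed-point analysis of the three corners via the formula for $\bigl(G\times_N Y\bigr)^K$, the gluing lemma in the case $K\in\mathcal{F}$, and the key uniqueness-of-the-double-coset step (which correctly uses both axioms of a strong equivalence relation together with the definition of $N_G[H]$) in the case $K\in\mathcal{G}\setminus\mathcal{F}$ are all exactly the content of their proof.
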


The existence of maps $i$ and $f_{[H]}$ follows from the universal properties of appropriate classifying spaces. Moreover, if the map $i$ fails to be injective, we can replace it with the inclusion into the mapping cylinder. Note that for a map $f \colon Y \to Z$ the dimension of the mapping cylinder is equal to $\mathrm{max}\{\mathrm{dim}Y +1, \mathrm{dim}Z\}$. Also note that by restricting the $G$--action to $N_G[H]$, a model for $E_{\mathcal{F}}G$ may be seen as a model for $E_{\mathcal{F} \cap N_G[H]} N_G[H]$. These considerations lead to the following corollary which gives us control on the dimension of the obtained model.

\begin{corollary}\cite[Remark~2.5]{LuWe}\label{cor:dimmodel}
With the above notation we have the following inequality 
\[\mathrm{gd}_{\mathcal G}G \leqslant  \mathrm{max}\big\{\mathrm{gd}_{\mathcal F}G+1, \underset{{[H] \in I}}{\mathrm{max}} \{\mathrm{gd}_{\mathcal G[H]}N_G[H]\} \big\}.\]
\end{corollary}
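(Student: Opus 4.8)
The plan is to read the dimension bound directly off the cellular $G$--pushout of Theorem~2.3 by choosing all four corners to be models of minimal (or near-minimal) dimension, and then tracking how induction, disjoint union, the pushout, and the mapping cylinder affect dimension. First I would fix, for each $[H]\in I$, a model for $\ee{\mc{G}[H]}{N_G[H]}$ of dimension $\gd{\mc{G}[H]}{N_G[H]}$ and a model for $\ee{\mc{F}}{G}$ of dimension $\gd{\mc{F}}{G}$. The key economy is to \emph{reuse} the latter: as noted in the excerpt, restricting the $G$--action on $\ee{\mc{F}}{G}$ to $N_G[H]$ yields a model for $\ee{\mc{F}\cap N_G[H]}{N_G[H]}$ whose dimension is still at most $\gd{\mc{F}}{G}$. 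Since induction $G\times_{N_G[H]}(-)$ sends a $k$--cell to a $k$--cell and disjoint union does not raise dimension, the top--left corner $\coprod_{[H]\in I} G\times_{N_G[H]} \ee{\mc{F}\cap N_G[H]}{N_G[H]}$ then has dimension at most $\gd{\mc{F}}{G}$.

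Next I would feed these choices into the pushout to obtain a model $X$ for $\ee{\mc{G}}{G}$ and bound $\dim X$. For a pushout one of whose legs is a $G$--CW--inclusion, the total space is obtained from the target of that inclusion by attaching the cells of the other corner not already present, so its dimension is the maximum of the dimensions of those two corners. Here the relevant corners are the top--right $\ee{\mc{F}}{G}$, of dimension $\gd{\mc{F}}{G}$, and the bottom--left $\coprod_{[H]\in I} G\times_{N_G[H]} \ee{\mc{G}[H]}{N_G[H]}$, of dimension at most $\max_{[H]\in I}\gd{\mc{G}[H]}{N_G[H]}$. This alone would already give the claimed bound, but without the ``$+1$''.

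The one point that forces the extra $+1$, and the step I expect to need the most care, is the standing hypothesis that the horizontal map $i$ be a $G$--CW--inclusion; in general $i$ is only guaranteed to be a $G$--map. I would therefore replace $\ee{\mc{F}}{G}$ by the mapping cylinder of $i$, which is $G$--homotopy equivalent to $\ee{\mc{F}}{G}$ and receives the top--left corner as a genuine inclusion, so that the hypotheses of Theorem~2.3 hold and the resulting pushout is still a model for $\ee{\mc{G}}{G}$. By the dimension formula for a mapping cylinder recalled in the excerpt, $\dim\mathrm{Cyl}(i)\leqslant\max\{\dim(\text{top--left})+1,\ \gd{\mc{F}}{G}\}\leqslant\gd{\mc{F}}{G}+1$, where the last inequality uses the bound on the top--left corner from the first paragraph.

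Finally, substituting $\mathrm{Cyl}(i)$ for the top--right corner and applying the pushout dimension estimate of the second paragraph gives $\dim X\leqslant\max\{\gd{\mc{F}}{G}+1,\ \max_{[H]\in I}\gd{\mc{G}[H]}{N_G[H]}\}$. Since $X$ is a model for $\ee{\mc{G}}{G}$ and $\gd{\mc{G}}{G}$ is by definition the minimal dimension of such a model, the stated inequality follows. The only genuinely delicate points are the two dimension bookkeeping facts (that a pushout along a CW--inclusion and a mapping cylinder behave as claimed on dimensions), both of which are standard for $G$--CW--complexes and are precisely the facts flagged in the discussion preceding the corollary.
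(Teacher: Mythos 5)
Your argument is correct and follows exactly the route the paper sketches in the paragraph preceding the corollary (and in \cite[Remark~2.5]{LuWe}): restrict the model for $E_{\mathcal F}G$ to get the $E_{\mathcal F\cap N_G[H]}N_G[H]$ corners, replace $i$ by the inclusion into its mapping cylinder to account for the $+1$, and read the dimension off the resulting cellular $G$--pushout. No discrepancies to report.
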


We finish this section with a lemma that relates geometric dimension of a group to geometric dimension of its finitely generated subgroups.

\begin{lemma}\cite[Theorem~4.3]{LuWe}\label{lem:modellimit} Let $G$ be a  group and $\mathcal{F}$ a family of subgroups of $G$ such that any $F \in \mathcal{F}$ is contained in some finitely generated subgroup of $G$. Suppose that there exists an integer $d>0$ such that for every finitely generated subgroup $K$ of $G$ we have $\mathrm{gd}_{\mathcal{F} \cap K}K \leqslant d$. Then $\mathrm{gd}_{\mathcal{F}}G \leqslant d+1$.
\end{lemma}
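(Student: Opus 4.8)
The plan is to realise $G$ as the directed union of its finitely generated subgroups and to assemble the given low-dimensional models into a homotopy colimit, paying a single extra dimension for the colimit direction. First I would record that the collection $\mathcal{P}$ of finitely generated subgroups of $G$, ordered by inclusion, is a directed poset: the subgroup generated by two finitely generated subgroups is again finitely generated. Moreover $G=\bigcup_{K\in\mathcal{P}}K$, and by hypothesis every $F\in\mathcal{F}$ lies in some $K\in\mathcal{P}$, so $\mathcal{F}=\bigcup_{K}(\mathcal{F}\cap K)$.

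For each $K$ I would choose a model $X_K$ for $E_{\mathcal{F}\cap K}K$ with $\dim X_K\leqslant d$, and induce up to the $G$--CW--complex $G\times_{K}X_K$. Whenever $K\subseteq K'$, the restriction of $X_{K'}$ along $K\hookrightarrow K'$ is again a model for $E_{\mathcal{F}\cap K}K$, so the universal property yields a $K$--map $X_K\to \mathrm{res}^{K'}_K X_{K'}$, unique up to $K$--homotopy; after inducing up these organise into a diagram of $G$--spaces indexed by $\mathcal{P}$ that commutes up to coherent homotopy. I would then set $Y=\mathrm{hocolim}_{K\in\mathcal{P}}\,(G\times_K X_K)$ and verify that $Y$ is a model for $E_{\mathcal{F}}G$ by inspecting fixed point sets. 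For a subgroup $H\leqslant G$ one has $Y^H\simeq\mathrm{hocolim}_K (G\times_K X_K)^H$, and $(G\times_K X_K)^H$ decomposes over those cosets $gK$ with $g^{-1}Hg\subseteq K$ as the disjoint union of the fixed sets $X_K^{g^{-1}Hg}$. If $H\in\mathcal{F}$ then, since $\mathcal{F}$ is conjugation-closed, cofinally in $\mathcal{P}$ the group $g^{-1}Hg$ lies in $\mathcal{F}\cap K$ and these fixed sets are contractible, so the directed homotopy colimit is contractible; if $H\notin\mathcal{F}$ then either $H$ is contained in no $K$, or $g^{-1}Hg\notin\mathcal{F}\cap K$ and hence $X_K^{g^{-1}Hg}=\emptyset$ for every relevant $K$, so $Y^H=\emptyset$. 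This is the routine part of the argument.

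The main obstacle is the dimension bound: one must show that the homotopy colimit over the directed poset $\mathcal{P}$ admits a $G$--CW model of dimension at most $d+1$, rather than $d+\dim(\text{nerve }\mathcal{P})$, which a naive bar construction would give and which is in general unbounded. The essential point is that $\mathcal{P}$ is \emph{directed}: a filtered homotopy colimit is modelled by a mapping telescope, and replacing the coherent structure maps by genuine $G$--CW--inclusions (the telescope/mapping-cylinder direction) costs exactly one dimension. Concretely, after strictifying the diagram one builds $Y$ as an increasing union of mapping cylinders of the induced $G$--maps; each cylinder over a $d$--dimensional complex is $(d+1)$--dimensional, and passing to the directed union does not raise the dimension further, which yields $\mathrm{gd}_{\mathcal{F}}G\leqslant d+1$.

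The delicate technical step is carrying out this strictification over a possibly uncountable directed index while keeping the uniform $+1$ control, so that no higher-dimensional correction terms enter; here the exactness of filtered colimits (which guarantees that only the first mapping-cylinder layer contributes) is what keeps the telescope construction honest. I would expect the bookkeeping of this strictification, rather than any single homotopy-theoretic input, to be where the real work lies.
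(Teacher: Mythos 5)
The paper does not actually prove this lemma; it is quoted verbatim from L\"uck--Weiermann \cite[Theorem~4.3]{LuWe}, so there is no in-paper argument to compare against. Your overall strategy --- exhaust $G$ by its finitely generated subgroups, induce the $d$--dimensional models up to $G$, and assemble them over the directed poset at the cost of one extra dimension --- is exactly the strategy behind the cited result, and the routine part (isotropy, fixed points) is essentially fine, modulo one imprecision: $(G\times_K X_K)^H$ is a \emph{disjoint union} of contractible pieces indexed by $(G/K)^H$, not itself contractible; you additionally need that $\operatorname{colim}_K (G/K)^H$ is a single point (every $g$ eventually lies in $K$) to conclude that the filtered homotopy colimit of these fixed sets is weakly contractible.

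The genuine gap is at the step you yourself flag as ``where the real work lies'': the claim that the homotopy colimit over a possibly uncountable directed poset $\mathcal{P}$ admits a $G$--CW model of dimension $d+1$. As written, none of the tools you invoke closes it. Strictifying a homotopy-coherent diagram over a poset (e.g.\ by the bar/Grothendieck construction) replaces each $G\times_K X_K$ by a space built from the nerve of $\mathcal{P}$ and inflates the dimension by $\dim N(\mathcal{P})$, which is unbounded; the ``increasing union of mapping cylinders'' picture is only literally available when the index set is $\mathbb{N}$ (or at least admits a cofinal chain), and the one-layer truncation of the bar construction --- the double mapping cylinder on $\coprod_{K\le K'} G\times_K X_K \rightrightarrows \coprod_K G\times_K X_K$ --- computes the homotopy colimit only if one controls the higher coherences $f_{K'K''}\circ f_{KK'}\simeq f_{KK''}$, which is precisely the content being asserted. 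Finally, ``exactness of filtered colimits'' is an algebraic fact: it does show that a filtered colimit of modules has a presentation of length one by coproducts of the given modules, hence gives $\mathrm{cd}_{\mathcal{F}}G\le d+1$, but it does not by itself produce a $(d+1)$--dimensional $G$--CW--complex, and converting back from $\mathrm{cd}$ to $\mathrm{gd}$ costs $\max\{3,\cdot\}$, which destroys the bound for $d\le 1$. So the decisive step of the geometric statement is asserted rather than proved; to complete the argument you would either have to carry out the transfinite/obstruction-theoretic bookkeeping honestly (any obstruction cocycle in dimension $d+1$ is supported on a compact, hence finitely generated, piece and can be killed at a later stage of the system) or simply cite \cite[Theorem~4.3]{LuWe} as the paper does.
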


\subsection{Virtually abelian stabilisers}\label{subsec:vabstab}

For an integer $r>0$ let $\vab{r}$ denote the family of all subgroups of $G$ that are finitely generated virtually abelian of rank at most $r$, i.e., we have $ H \in \vab{r}$ if and only if $H$ contains a finite-index subgroup isomorphic to $\mathbb{Z}^n$ with $n \leqslant r$. Notice that we have the following chain of inclusions
\[\vab{0} \subset \vab{1} \subset \vab{2} \subset \ldots \subset \vab{r} \subset \ldots. \]
Also note that $\vab{0}$ is the family of all finite subgroups, and  $\vab{1}$ is the family of all virtually cyclic subgroups.

We will now describe the construction outlined in the previous section, applied to the inclusion of families $\vab{r-1} \subset \vab{r}$. Define an equivalence relation on $\vab{r} \setminus \vab{r-1}$ by
\[H \sim H' \text{ if and only if } \mathrm{rk}(H \cap H') =r.\]
One easily verifies that it is a strong equivalence relation.
For $[H] \in [\vab{r} \setminus \vab{r-1}]$ the group $N_G[H]$ is defined as \[N_G[H]=\{g \in G \mid \mathrm{rk}(g^{-1}Hg \cap H)=r\}\] and it is called the \emph{commensurator} of $H$ in $G$. The family $\vab{r}[H]$ is given by \[\vab{r}[H] = \{H' \in \vab{r} \setminus \vab{r-1} \mid \mathrm{rk}(H' \cap H)=r \} \cup (\vab{r-1} \cap N_G[H]).\] 

The most difficult part is to construct models for $E_{\vab{r}[H]}N_G[H]$. For this, first observe that the family $\vab{r}[H]$ can be written as the union \[\vab{r}[H] = \mathcal{ALL}[H] \cup (\vab{r-1} \cap N_G[H]),\]
where $\mathcal{ALL}[H]$ denotes the smallest family of subgroups of $G$ that contains all representatives $H'$ of $[H]$. 

We have the following two lemmas.

\begin{lemma}{\cite[Lemma~2.4]{CMNP}}\label{lem:modelunionoffam} Let $G$ be a group and let $\mathcal{F}$ and $\mathcal{G}$ be two families of subgroups. Then we have
\[\mathrm{gd}_{\mathcal{F} \cup \mc{G}}{G} \leqslant \mathrm{max} \{ \gd{\mc{F}}{G} ,\gd{\mc{G}}{G}, \gd{\mc{F} \cap \mc{G}}{G}+1  \}.\]
\end{lemma}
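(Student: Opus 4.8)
The plan is to realise a model for $\ee{\mc{F}\cup\mc{G}}{G}$ as a homotopy pushout (double mapping cylinder) of models for the three smaller families. First I note that both $\mc{F}\cup\mc{G}$ and $\mc{F}\cap\mc{G}$ are again families of subgroups (each is closed under conjugation and passage to subgroups, since $\mc{F}$ and $\mc{G}$ are). Choose models $X_{\mc{F}}$, $X_{\mc{G}}$ and $X_{\cap}$ of minimal dimension for $\ee{\mc{F}}{G}$, $\ee{\mc{G}}{G}$ and $\ee{\mc{F}\cap\mc{G}}{G}$ respectively. Since $\mc{F}\cap\mc{G}\subset\mc{F}$ and $\mc{F}\cap\mc{G}\subset\mc{G}$, the space $X_{\cap}$ has stabilisers in $\mc{F}$ and in $\mc{G}$, so the universal property recalled above yields $G$--maps $f\colon X_{\cap}\to X_{\mc{F}}$ and $g\colon X_{\cap}\to X_{\mc{G}}$, which after equivariant cellular approximation we may assume to be cellular. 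I would then let $X$ be the $G$--CW double mapping cylinder of $X_{\mc{F}}\xleftarrow{f}X_{\cap}\xrightarrow{g}X_{\mc{G}}$, that is $X_{\mc{F}}\cup_f (X_{\cap}\times[0,1])\cup_g X_{\mc{G}}$.

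To see that $X$ is a model for $\ee{\mc{F}\cup\mc{G}}{G}$ I would verify the fixed-point characterisation from the definition. The key structural fact is that for every subgroup $H\subset G$ the functor $(-)^H$ commutes with this construction (fixed points commute with pushouts along $G$--cofibrations, and $(X_{\cap}\times[0,1])^H=X_{\cap}^H\times[0,1]$ as $G$ acts trivially on the interval), so $X^H$ is the double mapping cylinder of $X_{\mc{F}}^H\leftarrow X_{\cap}^H\to X_{\mc{G}}^H$. One then argues by cases. If $H\notin\mc{F}\cup\mc{G}$ all three fixed-point sets are empty, hence $X^H=\varnothing$. If $H\in\mc{F}\setminus\mc{G}$ then $H\notin\mc{F}\cap\mc{G}$, so $X_{\cap}^H$ and $X_{\mc{G}}^H$ are empty while $X_{\mc{F}}^H$ is contractible, whence $X^H=X_{\mc{F}}^H$ is contractible; the case $H\in\mc{G}\setminus\mc{F}$ is symmetric. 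Finally, if $H\in\mc{F}\cap\mc{G}$ all three fixed-point sets are contractible, and the homotopy pushout of contractible spaces along arbitrary maps is contractible, so $X^H$ is contractible.

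For the dimension bound, the double mapping cylinder has dimension $\max\{\dim X_{\mc{F}},\dim X_{\mc{G}},\dim X_{\cap}+1\}$, the $+1$ arising from the cylinder coordinate on the middle piece $X_{\cap}\times[0,1]$. Since the chosen models realise the respective geometric dimensions, this gives precisely $\gd{\mc{F}\cup\mc{G}}{G}\leqslant\max\{\gd{\mc{F}}{G},\gd{\mc{G}}{G},\gd{\mc{F}\cap\mc{G}}{G}+1\}$.

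The step I expect to require the most care is the interaction of $(-)^H$ with the double mapping cylinder together with the case analysis: one must check that when one or both of the outer fixed-point sets are empty the cylinder genuinely collapses to the remaining contractible piece rather than contributing a spurious cone, and that in the final case contractibility of $X^H$ holds regardless of what $f$ and $g$ do. These points are the crux of why the union family is well behaved. By contrast, the technical replacement of $f$ and $g$ by cellular $G$--maps, needed to form a genuine $G$--CW--complex, is routine and does not affect the dimension count.
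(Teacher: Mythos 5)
Your proof is correct and follows essentially the same route as the source the paper cites for this statement (the paper itself gives no proof of Lemma~\ref{lem:modelunionoffam}, only the reference to \cite{CMNP}): one realises $\ee{\mc{F}\cup\mc{G}}{G}$ as the $G$--CW double mapping cylinder of $\ee{\mc{F}}{G}\leftarrow \ee{\mc{F}\cap\mc{G}}{G}\rightarrow \ee{\mc{G}}{G}$, checks the fixed-point sets case by case, and reads off the dimension with the $+1$ on the middle term. Your case analysis and the contractibility of a homotopy pushout of contractible spaces are exactly the points that make the argument work, so there is nothing to add.
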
 

\begin{lemma}\cite[Proposition~5.1(i)]{LuWe}\label{lem:passingtosmaller} Let $G$ be a group and let $\mathcal{F}$ and $\mathcal{G}$ be two families of subgroups such that $\mc{F} \subset \mc{G}$. Suppose for every $H \in \mc{G}$ we have $\gd{\mc{F}\cap H}{H} \leqslant d$. Then \[\gd{\mc{F}}{G} \leqslant \gd{\mc{G}}{G}+d.\]\end{lemma}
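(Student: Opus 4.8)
The plan is to build a model for $E_{\mc{F}}G$ by \emph{fibering} a minimal model for $E_{\mc{G}}G$ over its cells, replacing each orbit of cells by a product with the relevant lower classifying space. Fix a model $Y$ for $E_{\mc{G}}G$ with $\dim Y = \gd{\mc{G}}{G} =: m$, and for each subgroup $H$ occurring as a cell stabiliser of $Y$ fix a model $Z_H$ for $E_{\mc{F}\cap H}H$ with $\dim Z_H \leqslant d$, which exists by hypothesis. I would construct a $G$--CW--complex $X$ together with a $G$--map $p\colon X \to Y$ by induction over the skeleta of $Y$: whenever $Y$ is obtained from its $(n-1)$--skeleton by a $G$--pushout attaching cells $\coprod_i G\times_{H_i} D^n$, I replace this by a $G$--pushout attaching $\coprod_i G \times_{H_i}(Z_{H_i}\times D^n)$ along $\coprod_i G\times_{H_i}(Z_{H_i}\times S^{n-1})$, the equivariant attaching maps into the partially built complex being supplied by the universal property of classifying spaces since each $G\times_{H_i}(Z_{H_i}\times S^{n-1})$ has stabilisers in $\mc{F}$. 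The dimension bound is then immediate, as each replacement cell $G\times_{H_i}(Z_{H_i}\times D^n)$ has dimension at most $\dim Z_{H_i} + n \leqslant d + m$, so $\dim X \leqslant \gd{\mc{G}}{G}+d$.

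The substance is verifying that $X$ is genuinely a model for $E_{\mc{F}}G$, which I would do by analysing the fixed-point sets $X^K$ through the induced map $p^K\colon X^K \to Y^K$. The key local computation is that over a cell of orbit type $G/H$ the $K$--fixed points of the fiber $G\times_H(Z_H \times D^n)$ form a disjoint union, indexed by cosets $gH$ with $g^{-1}Kg \leqslant H$, of copies of $Z_H^{\,g^{-1}Kg}\times D^n$; since $Z_H = E_{\mc{F}\cap H}H$, each such factor is contractible when $g^{-1}Kg \in \mc{F}$ and empty otherwise. Three cases follow. If $K \notin \mc{G}$, then $Y^K = \emptyset$ forces $X^K = \emptyset$ by equivariance of $p$. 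If $K \in \mc{G}\setminus\mc{F}$, then $g^{-1}Kg \notin \mc{F}$ for every $g$ (as $\mc{F}$ is closed under conjugation), so every fiber fixed-point set is empty and again $X^K = \emptyset$. Finally, if $K \in \mc{F}$, then $Y^K$ is contractible and every nonempty fiber fixed-point set is contractible.

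The main obstacle is this last case: concluding that $X^K$ is itself contractible. Here I would argue that $p^K$ exhibits $X^K$ as a complex built over the contractible base $Y^K$ with contractible fibers, and upgrade this to a homotopy equivalence by an inductive gluing argument over the skeleta of $Y^K$, using a Mayer--Vietoris comparison of the defining pushouts at each stage. Arranging the attaching maps in the inductive construction to be compatible with $p$ cell-by-cell, so that this fiberwise picture is literally available, is the technical heart of the argument; once it is in place the contractibility of $X^K$ follows formally from the contractibility of $Y^K$ and of the fibers, and $X$ is a model for $E_{\mc{F}}G$ of dimension at most $\gd{\mc{G}}{G}+d$, as required.
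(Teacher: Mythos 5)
The paper offers no proof of this lemma---it is quoted from L\"uck--Weiermann \cite[Proposition~5.1(i)]{LuWe}---and your construction is essentially the argument given there: replace each equivariant cell $G/H\times D^n$ of a minimal model $Y$ for $E_{\mc{G}}G$ by $G\times_{H}\bigl(E_{\mc{F}\cap H}H\times D^n\bigr)$, count dimensions, and analyse fixed points. The outline is correct, including the three-case fixed-point analysis and the recognition that the real content is showing $X^K$ is contractible for $K\in\mc{F}$. One point needs repair: the attaching maps are not supplied by ``the universal property of classifying spaces,'' because the partially built complex $X_{n-1}$ is not a classifying space for any family and has no such universal property. The correct mechanism is to carry through the induction the statement that $p_{n-1}^K\colon X_{n-1}^K\to Y_{n-1}^K$ is a weak homotopy equivalence for every $K\in\mc{F}$; the equivariant Whitehead theorem then lets you lift each attaching map $G\times_{H_i}(Z_{H_i}\times S^{n-1})\to Y_{n-1}$ through $p_{n-1}$ up to $G$--homotopy, and the gluing lemma applied to the two pushout squares (using that $Z_{H_i}^{\,g^{-1}Kg}$ is contractible or empty according to whether $g^{-1}Kg\in\mc{F}$) propagates the inductive statement to $X_n$. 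This single induction simultaneously produces the attaching maps compatible with $p$ and, at the top skeleton, gives $X^K\simeq Y^K\simeq *$ for $K\in\mc{F}$; so your ``inductive gluing argument'' is the right idea, but it should be run as the statement that $p^K$ is an equivalence rather than as contractibility of $X^K$ alone.
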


We are ready now to prove the special case of the main theorem. The proof of the main theorem will follow the same lines, using this special case as one of the steps.

\begin{proposition}\label{prop:dimvab}Let $G$ be virtually $\mathbb{Z}^n$ for some $n \geqslant 0$. Then for any $0\leqslant r <n$ we have \[\gd{\vab{r}}{G} \leqslant n+r.\]
\end{proposition}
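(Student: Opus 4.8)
The plan is to induct on $r$, proving the statement for all ambient ranks at once: for every $n > r$ and every finitely generated virtually $\mathbb{Z}^n$ group $G$ one has $\gd{\vab{r}}{G}\leq n+r$. The base case $r=0$ concerns the family of finite subgroups, and a virtually $\mathbb{Z}^n$ group acts properly and cocompactly by isometries on $\mathbb{R}^n$, so $\mathbb{R}^n$ is a model for $E_{\vab{0}}G$ and $\gd{\vab{0}}{G}\leq n$. For the inductive step I would feed the inclusion $\vab{r-1}\subset\vab{r}$ into the L\"uck--Weiermann machinery and apply Corollary~\ref{cor:dimmodel}, which reduces everything to two estimates: the term $\gd{\vab{r-1}}{G}+1\leq n+r$ (immediate from the inductive hypothesis), and the bound $\gd{\vab{r}[H]}N\leq n+r$ for $N:=N_G[H]$ and each commensurability class $[H]\in[\vab{r}\setminus\vab{r-1}]$.

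The heart of the argument is the analysis of $N$ and of the family $\vab{r}[H]=\mathcal{ALL}[H]\cup(\vab{r-1}\cap N)$. I would first record the relevant linear algebra: writing $A\cong\mathbb{Z}^n$ for a finite-index normal free abelian subgroup and $V=(H\cap A)\otimes\mathbb{Q}$ for the rank-$r$ rational subspace it spans, let $L=A\cap V\cong\mathbb{Z}^r$ be its saturation. Since conjugation by $A$ is trivial one has $A\subseteq N$, so $N$ is again finitely generated virtually $\mathbb{Z}^n$; moreover $L$ is normal in $N$ with quotient $Q:=N/L$ virtually $\mathbb{Z}^{n-r}$. The key identification I would establish, writing $\pi\colon N\to Q$ for the projection, is that $\mathcal{ALL}[H]=\{H'\leq N\mid \pi(H')\text{ is finite}\}$: the inclusion $\subseteq$ is clear, while for $\supseteq$ one enlarges a subgroup $H'$ of finite image to $H'L$, which is virtually $\mathbb{Z}^r$ and lies in $[H]$. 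Inflating a model for $\ue Q$ along $\pi$ then yields a model for $E_{\mathcal{ALL}[H]}N$ of the same dimension, so $\gd{\mathcal{ALL}[H]}{N}\leq \gd{\vab{0}}{Q}\leq n-r$.

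With this in hand I would apply Lemma~\ref{lem:modelunionoffam} to $\mathcal{ALL}[H]\cup(\vab{r-1}\cap N)$, producing three terms. The first is $\gd{\mathcal{ALL}[H]}{N}\leq n-r$; the second is $\gd{\vab{r-1}\cap N}{N}=\gd{\vab{r-1}}{N}\leq n+r-1$, by the inductive hypothesis applied to the virtually $\mathbb{Z}^n$ group $N$. The delicate term is $\gd{\mathcal{ALL}[H]\cap\vab{r-1}}{N}+1$. To control it I would invoke Lemma~\ref{lem:passingtosmaller} with $\mathcal{F}=\mathcal{ALL}[H]\cap\vab{r-1}$ and $\mathcal{G}=\mathcal{ALL}[H]$: any $H'\in\mathcal{ALL}[H]$ is virtually $\mathbb{Z}^s$ with $s\leq r$, and the induced family $\vab{r-1}\cap H'$ is either the family of all subgroups of $H'$ (if $s<r$, giving dimension $0$) or, if $s=r$, is bounded by $\gd{\vab{r-1}}{H'}\leq 2r-1$ via the inductive hypothesis for the virtually $\mathbb{Z}^r$ group $H'$ (ambient rank $r$, family rank $r-1<r$). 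Hence $\gd{\mathcal{ALL}[H]\cap\vab{r-1}}{N}\leq(n-r)+(2r-1)=n+r-1$, so the third term is at most $n+r$. Taking the maximum of the three terms gives $\gd{\vab{r}[H]}{N}\leq n+r$, which closes the induction.

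I expect the main obstacle to be the middle step: identifying $N_G[H]$ as virtually $\mathbb{Z}^n$ and, crucially, recognising $\mathcal{ALL}[H]$ as the pullback along the saturation quotient $\pi\colon N\to N/L$ of the finite-subgroup family of $Q$. This is precisely what makes the two auxiliary lemmas combine to give the target bound. The estimate of the third term is tight, saturating $n+r$, so any coarser bound there would break the result; one must also take care that the inductive hypothesis is only ever invoked at genuinely smaller parameters, namely at family rank $r-1<r$ and at ambient ranks $n$ (for $N$) and $r$ (for $H'$), both strictly exceeding the corresponding family rank.
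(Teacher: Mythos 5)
Your proof is correct and follows essentially the same route as the paper: induction on $r$ via the L\"uck--Weiermann pushout, splitting $\vab{r}[H]$ as $\mathcal{ALL}[H]\cup(\vab{r-1}\cap N_G[H])$, identifying $\mathcal{ALL}[H]$ as the pullback of the finite-subgroup family along $N_G[H]\to N_G[H]/H'$ for a normal representative $H'\sim H$, and bounding the intersection term by Lemma~\ref{lem:passingtosmaller}. The only (harmless) deviation is that you produce the normal representative as the saturation $L=A\cap(H\otimes\mathbb{Q})$ in a normal finite-index $\mathbb{Z}^n$, whereas the paper takes the intersection of the finitely many $N_G[H]$-conjugates of $H\cap\mathbb{Z}^n$; your explicit bookkeeping of which ambient rank the inductive hypothesis is invoked at is a welcome clarification of a point the paper leaves implicit.
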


\begin{proof} We proceed by induction on $r$. For $r=0$ we have
$\gd{\vab{r}}{G} =n$ \cite[Theorem~5.13]{LuWe}. Assume that $\gd{\vab{r-1}}{G} \leqslant n+r-1$ and consider the inclusion of families $\vab{r-1} \subset \vab{r}$. By Corollary~\ref{cor:dimmodel} we have
\[\mathrm{gd}_{\vab{r}}G \leqslant  \mathrm{max}\big\{\mathrm{gd}_{\vab{r-1} }G+1, \underset{{[H] \in I}}{\mathrm{max}} \{\mathrm{gd}_{\vab{r}[H]}N_G[H]\} \big\},\]
so in order to get the claim we need to show that $\mathrm{max}_{[H] \in I} \{\mathrm{gd}_{\mathcal G[H]}N_G[H] \} \leqslant n+r$.  Take $[H] \in I$ and consider the commensurator $N_G[H]$. One easily sees that $N_G[H]$ is virtually $\mathbb{Z}^n$. By Lemma~\ref{lem:modelunionoffam} we have
\begin{align}
\begin{split}
\gd{\vab{r}[H]}{N_G[H]} \leqslant    \mathrm{max}\{&\gd{\mathcal{ALL}[H]}{N_G[H]},\gd{\vab{r-1}\cap N_G[H]}{N_G[H]},\\& \gd{ALL[H]\cap \vab{r-1} }{N_G[H]}+1 \}
\end{split}
\end{align}
First note that since $N_G[H] \subset G$, by inductive assumption we get that 
\begin{equation}\gd{\vab{r-1}\cap N_G[H]}{N_G[H]} \leqslant n+r-1.
\end{equation}
Next, we will bound $\gd{\mathcal{ALL}[H]}{N_G[H]}$. For this we need the following claim. 
\begin{claim} There exists $H'$ with $H \sim H'$ such that $H'$ is normal in $N_G[H]$.
\end{claim}
The group $N_G[H]$ contains $\mathbb{Z}^n$ as a finite-index subgroup. We can assume that $H \subset \mathbb{Z}^n$ (be replacing $H$ with $H \cap \mathbb{Z}^n$ if necessary).
Now since $\mathbb{Z}^n$ and $H$ commute and since $\mathbb{Z}^n$ has finite index in $N_G[H]$ it follows that there are only finitely many distinct $N_G[H]$--conjugates of $H$. Moreover, by definition of $N_G[H]$ any two such conjugates are equivalent, i.e.,\ their intersection has rank $r$. Define $H'$ to be the intersection of all $N_G[H]$--conjugates of $H$. One easily sees that $H' \sim H$ and that $H'$ is normal in $N_G[H]$. This finishes the proof of the claim.\smallskip

Now consider the quotient $N_G[H]/H'$. We have that any model for $E_{\vab{0}}N_G[H]/H'$ is a model for $E_{\mathcal{ALL}[H]}N_G[H]$, where the action is given by the projection \[N_G[H] \to N_G[H]/H'.\] This is because a subgroup $F \subset N_G[H]$ belongs to the family $\mathcal{ALL}[H]$ if and only if its image under $N_G[H] \to N_G[H]/H'$ is finite.
The group $N_G[H]/H'$ contains $\mathbb{Z}^{n-r}$ as a finite-index subgroup, and thus by \cite[Theorem 5.13]{LuWe} there exists a $(n-r)$--dimensional model for $E_{\vab{0}}N_G[H]/H'$. Consequently, we have \begin{equation}\label{eq:lemvab:all}\gd{\mathcal{ALL}[H]}{N_G[H]} \leqslant n-r. \end{equation}
It remains to bound $\gd{\mathcal{ALL}[H]\cap \vab{r-1}}{N_G[H]}$. We would like to use Lemma~\ref{lem:passingtosmaller} applied to the families \[\mathcal{ALL}[H]\cap \vab{r-1} \subset \mathcal{ALL}[H].\] For this, we have to estimate $\gd{\vab{r-1}}{F}$ for every $F \in \mathcal{ALL}[H]$. Note that if $F$ has rank  at most $r-1$ then $\gd{\vab{r-1}}{F}=0$, as in this case the one-point space is a $0$--dimensional model for $\ee{\vab{r-1}}{F}$. 
If $F$ has rank $r$ then by the inductive assumption we have $\gd{\vab{r-1}}{F} \leqslant r+r-1$.  

Thus by Lemma~\ref{lem:passingtosmaller} and by \eqref{eq:lemvab:all} we conclude that 
\begin{equation*}\gd{\mathcal{ALL}[H]\cap \vab{r-1}}{N_G[H]} \leqslant (n-r) + (r+r-1)= n+r-1.\qedhere
\end{equation*}
\end{proof}

In general, the structure of the commensurator $N_G[H]$ may be complicated. However, if $G$ satisfies the following condition, using Lemma~\ref{lem:modellimit} one can approximate $N_G[H]$ by normalisers of subgroups $H'$ with $H' \sim H$ (which are usually easier to deal with). This condition is the higher rank analogue of L\"{u}ck's condition $\conc$ \cite[Condition~4.1]{Lucze}.

\begin{definition}[Condition $\conc$]\label{cond:c} We say that a group $G$ satisfies condition $\conc$ if for every $H \in \vab{r}$ and for every finitely generated subgroup $K \subset N_G[H]$ there exists a subgroup $H' \in \vab{r}$ with $H \sim H'$ such that $\langle K , H \rangle \subset N_G(H')$.
\end{definition}

Notice that the Claim in the proof of Proposition~\ref{prop:dimvab} states that a virtually abelian groups satisfy (a strong form of) condition~\hyperref[cond:c]{$\conc$}.

\subsection{Bredon cohomological dimension $\mathrm{cd}_{\mathcal{F}}G$.}\label{subsec:bredondim}

Bredon cohomology is an equivariant cohomology theory that  is well suited for studying classifying spaces for families of subgroups. We will not recall general theory and refer the reader to \cite{luckbook}. Roughly, in comparison with usual group cohomology where one considers $G$--modules, in Bredon cohomology one considers $\mathcal{O}_{\mc{F}}G$--modules, where $\mathcal{O}_{\mc{F}}G$ is the \emph{orbit category} of $G$ with respect to the family $\mathcal{F}$. The category of $\mathcal{O}_{\mc{F}}G$--modules is an abelian category with enough projectives, and thus one can `do' the homological algebra there. Consequently, there is a notion of \emph{Bredon cohomological dimension for the family} $\mc{F}$ which  is defined to be 
\[\cd{\mc{F}} G = \sup\{ n \in \mathbb{N} \ | \ \mathrm{H}^n_{\mathcal{F}}(G,M)\neq 0 \text{ for some } \mathcal{O}_{\mc{F}}G\text{--module } M  \}, \]
where $\mathrm{H}^n_{\mathcal{F}}(G,M)$ denotes the $n$--th Bredon cohomology group with coefficients in $M$. As mentioned in the Introduction, Bredon cohomological dimension is related to geometric dimension by the following inequality \cite[Theorem~0.1]{LuMe}:
\[\cd{\mc{F}}{G} \leqslant \gd{\mc{F}}{G} \leqslant \mathrm{max} \{3,\cd{\mc{F}}{G}\}.\]

The method of constructing classifying spaces presented in Subsections~\ref{subsec:classspace} and \ref{subsec:vabstab} has its cohomological counterpart, which, rather than a model for $E_{\mathcal{F}}G$, produces a resolution of the trivial $\mathcal{O}_{\mc{F}}G$--module $\underline{\mathbb{Z}}$ by projective $\mathcal{O}_{\mc{F}}G$--modules. In particular, it can be used to obtain an upper bound for $\cd{\mc{F}}{G}$.  For a detailed account of the construction see \cite[Section~8.2]{Degthesis} or \cite[Section~3]{DegPe}. All the steps of the construction in Subsections~\ref{subsec:classspace} and \ref{subsec:vabstab} have their cohomological counterparts, which we sum up in the following remark.

\begin{remark}\label{rem:cdvsgd}Corollary~\ref{cor:dimmodel}, Lemma~\ref{lem:modellimit}, Lemma~\ref{lem:modelunionoffam} and Lemma~\ref{lem:passingtosmaller} all remain true if one replaces $\gd{\mc{F}}{G}$ by $\cd{\mc{F}}{G}$. For the proofs see respectively \cite[Corollary~8.2.9]{Degthesis}, \cite[Corollary~8.2.5(i)]{Degthesis}, \cite[Lemma~4.20]{VM} and \cite[Corollary~8.2.3]{Degthesis}.

When we refer to a `cohomological dimension version' of one of the above, we will add a suffix \textlabel{(cd)}{cond:cd} to the respective reference. For example, if we want to refer to Lemma~\ref{lem:modellimit} applied to Bredon cohomological dimension we will write Lemma~\ref{lem:modellimit}.\hyperref[cond:cd]{(cd)}.
\end{remark}

\section{Bredon cohomological dimension for $\mathrm{CAT}(0)$ groups}\label{sec:geomdimcat0}

A $\cze$ space is a geodesic metric space satisfying the so-called $\cze$ inequality \cite[Definition~II.1.1]{BH}. We make no use of the definition and only use properties of $\cze$ spaces and their isometries. We refer the reader to \cite{BH} for a detailed treatment of $\cze$ geometry and for definitions of various notions presented below. For a definition and some properties of topological dimension we refer the reader to \cite{DegPe} or \cite{Lucze}.

In this section we prove the main theorem of this paper. 

\begin{theorem}\label{thm:mainthm}Let $G$ be a group acting properly by semi-simple isometries on a complete proper $\mathrm{CAT}(0)$ space of topological dimension $n$. Suppose additionally that $G$ satisfies condition~\hyperref[cond:c]{$\conc$}. Then for any $0 \leqslant r \leqslant n$ we have 
\[\cd{\vab{r}}G \leqslant n+r+1.\]
\end{theorem}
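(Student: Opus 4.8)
The plan is to mimic the inductive structure of Proposition~\ref{prop:dimvab}, but replacing the internal group-theoretic combinatorics of the virtually abelian case with the geometry provided by the Flat Torus Theorem, and passing to the Bredon cohomological versions of all the tools via Remark~\ref{rem:cdvsgd}. First I would induct on $r$. For the base case $r=0$ the family $\vab{0}$ is the family of finite subgroups, and a proper action by semi-simple isometries on a complete proper $\cze$ space of topological dimension $n$ yields $\cd{\vab{0}}{G}\leqslant n$ (this is the standard bound for $\ue G$ coming from the action on the $\cze$ space, whose dimension controls the Bredon cohomological dimension for finite stabilisers). Assuming $\cd{\vab{r-1}}{G}\leqslant n+r$, I would apply the cohomological L\"uck--Weiermann pushout, Corollary~\ref{cor:dimmodel}.\hyperref[cond:cd]{(cd)}, to the inclusion $\vab{r-1}\subset\vab{r}$, which reduces the problem to bounding $\cd{\vab{r}[H]}{N_G[H]}$ by $n+r+1$ for each commensurator.

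Next I would analyse a single commensurator $N_G[H]$ where $H\in\vab{r}\setminus\vab{r-1}$. Here condition~\hyperref[cond:c]{$\conc$} is the crucial input: unlike the virtually abelian case, $N_G[H]$ need not be virtually abelian, so I cannot directly produce a normal representative. Instead, condition~\hyperref[cond:c]{$\conc$} lets me approximate $N_G[H]$ by the normalisers $N_G(H')$ of representatives $H'\sim H$ over all finitely generated subgroups $K\subset N_G[H]$. Using Lemma~\ref{lem:modellimit}.\hyperref[cond:cd]{(cd)}, it suffices to bound $\cd{\vab{r}[H]\cap K}{K}$ uniformly for finitely generated $K$, paying a $+1$ in the passage from finitely generated subgroups to $N_G[H]$ — this is precisely the source of the extra $+1$ in the bound $n+r+1$ compared with the $n+r$ of Proposition~\ref{prop:introabab}. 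For such a $K$, condition~\hyperref[cond:c]{$\conc$} places $\langle K,H\rangle$ inside some $N_G(H')$, so I may as well work inside a normaliser, where $H'$ is a normal virtually abelian subgroup of rank $r$.

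I would then decompose the family as in Subsection~\ref{subsec:vabstab}, writing $\vab{r}[H']=\mathcal{ALL}[H']\cup(\vab{r-1}\cap N_G(H'))$ and applying Lemma~\ref{lem:modelunionoffam}.\hyperref[cond:cd]{(cd)} to bound $\cd{\vab{r}[H']}{N_G(H')}$ by the maximum of the three terms $\cd{\mathcal{ALL}[H']}{N_G(H')}$, $\cd{\vab{r-1}\cap N_G(H')}{N_G(H')}$, and $\cd{\mathcal{ALL}[H']\cap\vab{r-1}}{N_G(H')}+1$. The middle term is bounded by $n+r$ from the inductive hypothesis applied to the subgroup $N_G(H')\subset G$. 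For the first term, since $H'$ is normal of rank $r$ the family $\mathcal{ALL}[H']$ is detected by the quotient map $N_G(H')\to N_G(H')/H'$, and the quotient acts properly by semi-simple isometries on the $\cze$ space obtained by the Flat Torus Theorem: the minimal set of $H'$ splits as $Y\times\mathbb{R}^r$ and $H'$ acts cocompactly on the $\mathbb{R}^r$ factor, so $N_G(H')/H'$ acts on $Y$, whose topological dimension is at most $n-r$. This gives $\cd{\mathcal{ALL}[H']}{N_G(H')}\leqslant n-r$. The third term I would bound by combining this with Lemma~\ref{lem:passingtosmaller}.\hyperref[cond:cd]{(cd)} exactly as in Proposition~\ref{prop:dimvab}, estimating $\cd{\vab{r-1}}{F}$ for $F\in\mathcal{ALL}[H']$ of rank at most $r$ via the inductive hypothesis.

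The main obstacle I expect is the geometric input underlying $\cd{\mathcal{ALL}[H']}{N_G(H')}\leqslant n-r$: one must verify that the Flat Torus Theorem genuinely produces a splitting of the minimal set on which the quotient $N_G(H')/H'$ acts properly by semi-simple isometries, and that the topological dimension of the complementary factor $Y$ is at most $n-r$ (rather than merely $\leqslant n$). This requires care because $H'$ is only virtually $\mathbb{Z}^r$, so I would first pass to the abelian finite-index subgroup to invoke the splitting and then control the dimension drop and the properness of the induced action under the finite-index and quotient operations. Assembling the three bounds, the worst term is $\max\{n-r,\,n+r,\,(n+r-1)+1\}=n+r$ inside the normaliser, and the extra $+1$ from Lemma~\ref{lem:modellimit}.\hyperref[cond:cd]{(cd)} yields $\cd{\vab{r}[H]}{N_G[H]}\leqslant n+r+1$, closing the induction.
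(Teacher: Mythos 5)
Your proposal is correct and takes essentially the same route as the paper: the same induction on $r$, the same L\"uck--Weiermann reduction to the commensurators, the same decomposition $\vab{r}[H]=\mathcal{ALL}[H]\cup(\vab{r-1}\cap N_G[H])$, and the same combination of condition~\hyperref[cond:c]{$\conc$}, Lemma~\ref{lem:modellimit}.\hyperref[cond:cd]{(cd)}, the Flat Torus Theorem and Proposition~\ref{prop:dimvab}. The only difference is bookkeeping --- you apply the limit lemma to all of $\vab{r}[H]$ before splitting the family inside a normaliser, whereas the paper splits first over $N_G[H]$ and pays the $+1$ only on the $\mathcal{ALL}[H]$ term --- and your arithmetic closes provided the estimate $\cd{\vab{r-1}}{F}\leqslant 2r-1$ for rank-$r$ members $F$ of $\mathcal{ALL}[H']$ is taken from Proposition~\ref{prop:dimvab} applied to the virtually abelian group $F$ (as your figure $(n+r-1)+1$ indicates), not from the outer inductive hypothesis, which would only give $n+r$ and spoil the count.
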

Note that if $G$ acts cocompactly then it automatically acts by semi-simple isometries. The following is the key result needed in our approach.

\begin{lemma}{\cite[Corollary~1]{DegPe}}\label{lem:dietnans} Suppose $G$ acts properly by semi-simple isometries on a complete proper $\mathrm{CAT}(0)$ space $X$ of topological dimension $n$. Then we have $\ucd{G}=\cd{\vab{0}}{G}\leqslant n$. 
\end{lemma}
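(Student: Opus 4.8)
The plan is to recognise the space $X$ itself as a model for the classifying space $\ue G$ and then to convert the topological smallness of $X$, namely that its covering dimension equals $n$, into the desired algebraic bound. The asserted equality $\ucd{G}=\cd{\vab{0}}{G}$ is merely the definition of the left-hand side together with the fact, recorded in Subsection~\ref{subsec:vabstab}, that $\vab{0}$ is precisely the family of finite subgroups; so the entire content is the inequality $\cd{\vab{0}}{G}\leqslant n$. To see that $X$ is a model for $\ue G$, note that properness of the action makes every point stabiliser finite, so any infinite subgroup $H$ has $X^{H}=\emptyset$, since a fixed point of $H$ would embed $H$ into a finite stabiliser; while for finite $H$ the Bruhat--Tits fixed point theorem for complete $\cze$ spaces makes $X^{H}$ nonempty, and as the intersection of the fixed sets of the individual semi-simple isometries it is convex, hence contractible. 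Thus $X^{H}$ is contractible for $H\in\vab{0}$ and empty otherwise.

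Next I would set up a comparison between Bredon cohomology for $\vab{0}$ and sheaf cohomology on the proper $G$--space $X$, the goal being to show $\mathrm{H}^{k}_{\vab{0}}(G,M)=0$ for every $\mathcal{O}_{\vab{0}}G$--module $M$ and every $k>n$. From the coefficient system $M$ one manufactures a $G$--equivariant coefficient sheaf $\mathcal{S}_{M}$ on $X$ and runs the isotropy spectral sequence associated to the quotient map $X\to X/G$. Its $E_{2}$--page has the shape $\mathrm{H}^{p}\big(X/G,\mathcal{H}^{q}\big)$, where $\mathcal{H}^{q}$ is the sheaf over $X/G$ assembled from the degree $q$ Bredon cohomology of the isotropy groups. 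Here properness of $X$ and of the action, together with semi-simplicity which keeps the orbit structure locally finite, guarantee both that $X/G$ inherits covering dimension at most $n$ and that the spectral sequence is available.

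The conclusion then follows from two dimension inputs. Every isotropy group lies in $\vab{0}$, and a finite group $F$ has $\cd{\vab{0}}{F}=0$, so the sheaves $\mathcal{H}^{q}$ vanish for $q>0$ and the spectral sequence collapses to $\mathrm{H}^{p}(X/G,\mathcal{H}^{0})$. Since $X/G$ is paracompact of covering dimension at most $n$, this group vanishes for $p>n$; hence $\mathrm{H}^{k}_{\vab{0}}(G,M)=0$ for all $k>n$, and $\cd{\vab{0}}{G}\leqslant n$ as required.

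The crux, and the main obstacle, is exactly this passage from topology to algebra: covering dimension $n$ controls \emph{sheaf} cohomology but not cellular or Bredon cohomology directly, because $X$ need not be a $G$--CW--complex of dimension $n$, and its cohomological dimension may a priori exceed $n$. Constructing $\mathcal{S}_{M}$ functorially in $M$, identifying its equivariant cohomology with $\mathrm{H}^{*}_{\vab{0}}(G,-)$, and verifying that properness and semi-simplicity really do provide the locally finite, finite-dimensional orbit structure needed to run and collapse the spectral sequence, constitute the technical work carried out in \cite{DegPe}.
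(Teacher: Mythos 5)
There is nothing to compare your argument against inside the paper: Lemma~\ref{lem:dietnans} is not proved here at all, but imported verbatim as \cite[Corollary~1]{DegPe} and used as a black box. Your sketch is, in effect, a reconstruction of the cited source's strategy, and it has the right shape: the equality $\ucd{G}=\cd{\vab{0}}{G}$ is purely definitional; the fixed-point analysis (Bruhat--Tits gives $X^{H}\neq\emptyset$ for finite $H$, convexity of fixed sets of isometries gives contractibility, properness forces $X^{H}=\emptyset$ for infinite $H$) is exactly what the paper records in the remark following the lemma; and the passage via equivariant sheaf cohomology over $X/G$, collapsing because $\cd{\vab{0}}{F}=0$ for every finite isotropy group $F$ and then vanishing above $n$ because $X/G$ is paracompact of covering dimension at most $n$, is the substance of Degrijse--Petrosyan's argument. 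You also correctly isolate the crux --- covering dimension bounds sheaf cohomology, not cellular or Bredon cohomology, precisely because $X$ carries no $G$--CW structure --- which is exactly why the paper works with $\cd{\vab{0}}{G}$ rather than $\gd{\vab{0}}{G}$ and cites \cite{DegPe} instead of arguing directly. Since you explicitly defer that comparison (constructing $\mathcal{S}_M$, identifying its equivariant cohomology with Bredon cohomology, justifying the spectral sequence) to \cite{DegPe}, your proposal matches the paper's treatment: both rest on the same external result.

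Two small corrections. First, you open by calling $X$ ``a model for $\ue G$''; the paper explicitly warns that $X$ is in general \emph{not} a model, since it need not be a $G$--CW--complex --- your own closing paragraph concedes this, so it is a slip of terminology, but the opening sentence should say only that $X$ has the fixed-point behaviour of a classifying space. Second, semi-simplicity is not what makes the orbit structure locally finite (properness does that), and it is in fact irrelevant to the family $\vab{0}$: a finite-order isometry of a complete $\cze$ space is automatically elliptic by the fixed point theorem. The semi-simplicity hypothesis is carried along because \cite[Corollary~1]{DegPe} is derived from a statement that also handles virtually cyclic stabilisers, where it is genuinely needed --- just as it is needed later in this paper, via the Flat Torus Theorem, in Lemma~\ref{lem:ftt}.
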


Observe that by the Fixed Point Theorem for $\cze$ spaces \cite[Corollary~II.2.8]{BH}, in the above situation, every finite subgroup of $G$ has a fixed point in $X$ and every fixed point set is contractible. However, the space $X$ is not a model for $\ue G =\ee{\vab{0}}{G}$ since in general it is not a $G$--CW--complex. By taking the nerve of an appropriate open cover of $X$ one obtains a $G$--CW--complex $Y$ homotopy equivalent to $X$, however, in case $\mathrm{dim}(X)=2$ it could happen that $\mathrm{dim}(Y)=3$ (see \cite{DegPe, Lucze}). Because of this, in our construction we use Bredon cohomological dimension, rather than the geometric dimension (see Remark~\ref{rem:cdvsgd}). We remark that our construction is geometric in spirit, and its key point is the use of the Flat Torus Theorem. 
\medskip 

From now on let $G$ be as in Theorem~\ref{thm:mainthm}, and let $X$ denote the $n$--dimensional $\cze$ space acted upon by $G$. The following lemma is the crucial geometric ingredient in the proof of the main theorem.

\begin{lemma}\label{lem:ftt}Suppose that $0<r\leqslant n$ and consider a subgroup $H \cong \mathbb{Z}^r \subset G$. Then the quotient $N_G(H)/H$ acts properly by semi-simple isometries on a complete proper $\cze$ space of dimension at most $n-r$.
\end{lemma}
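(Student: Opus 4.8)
The plan is to apply the Flat Torus Theorem to $H\cong\mathbb{Z}^r$ and to analyse the induced action of $N_G(H)/H$ on the non-Euclidean factor of the minimal set of $H$. As a subgroup of $G$, the group $H$ acts properly and by semi-simple isometries on $X$, so the Flat Torus Theorem \cite[Theorem~II.7.1]{BH} applies: the minimal set $\mathrm{Min}(H)=\bigcap_{h\in H}\mathrm{Min}(h)$ is a non-empty closed convex subspace of $X$ that splits isometrically as $\mathrm{Min}(H)=Y\times\mathbb{E}^r$, with $H$ acting trivially on $Y$ and as a cocompact lattice of translations on $\mathbb{E}^r$. Any $g\in N_G(H)$ satisfies $g\,\mathrm{Min}(H)=\mathrm{Min}(gHg^{-1})=\mathrm{Min}(H)$, and since the Euclidean factor is determined by $H$ (it is spanned by the translation axes of the elements of $H$) it is preserved by $g$; hence $g$ respects the decomposition and acts diagonally as $g=(g_Y,g_E)$. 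Being a closed convex subspace of the complete proper $\cze$ space $X$, the set $\mathrm{Min}(H)$ is itself complete, proper and $\cze$, and hence so is its factor $Y$, realised as the closed subset $Y\times\{e_0\}$. Finally, $\mathrm{Min}(H)=Y\times\mathbb{E}^r\subseteq X$ gives $\dim(Y\times\mathbb{E}^r)\leqslant n$, and as $\mathbb{E}^r$ is dimensionally full-valued we have $\dim(Y\times\mathbb{E}^r)=\dim Y+r$ (cf.\ \cite{DegPe}), so $\dim Y\leqslant n-r$.

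The assignment $g\mapsto g_Y$ is a homomorphism $N_G(H)\to\mathrm{Isom}(Y)$ that kills $H$, so it descends to an action of $Q:=N_G(H)/H$ on $Y$. The step I expect to require the most care is checking that this induced action retains both properness and semi-simplicity, that is, transferring these properties from $X$ down to the factor $Y$. For properness, the $N_G(H)$-action on $\mathrm{Min}(H)$ is proper, being a restriction of the proper $G$-action on $X$, and I would exploit the cocompactness of $H$ on $\mathbb{E}^r$. Fix a compact fundamental domain $D\subset\mathbb{E}^r$ for this action with base point $e_0\in D$, and let $C\subseteq Y$ be compact. Given $\bar g\in Q$ with $\bar g C\cap C\neq\emptyset$, after multiplying a representative $g$ by a suitable $h\in H$ (which does not change $g_Y$) we may assume $g_Ee_0\in D$; such a representative satisfies $gK\cap K\neq\emptyset$ for the compact set $K=C\times D$, and hence lies in the finite set $\{g\in N_G(H)\mid gK\cap K\neq\emptyset\}$. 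Thus only finitely many classes $\bar g$ move $C$ to meet itself, so $Q$ acts properly on $Y$.

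For semi-simplicity, each $g\in N_G(H)$ is semi-simple on $X$ and preserves the closed convex subset $\mathrm{Min}(H)$; using that the nearest-point projection $\pi\colon X\to\mathrm{Min}(H)$ is $g$-equivariant and distance non-increasing, a point realising the displacement of $g$ on $X$ projects to one realising it on $\mathrm{Min}(H)$, so $g|_{\mathrm{Min}(H)}$ is semi-simple. For a product isometry $(g_Y,g_E)$ of $Y\times\mathbb{E}^r$ the squared displacement at $(y,e)$ equals $d_Y(g_Yy,y)^2+d_{\mathbb{E}^r}(g_Ee,e)^2$, whose infimum is attained precisely when each summand is minimised separately; since $g|_{\mathrm{Min}(H)}$ is semi-simple it follows that $g_Y$ is semi-simple on $Y$. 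Hence $Q=N_G(H)/H$ acts properly and by semi-simple isometries on the complete proper $\cze$ space $Y$, which has dimension at most $n-r$, as required.
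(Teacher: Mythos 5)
Your proof is correct and follows essentially the same route as the paper: apply the Flat Torus Theorem to obtain the $N_G(H)$--invariant splitting $Y\times\mathbb{E}^r$, pass to the induced action of $N_G(H)/H$ on $Y$, and bound $\dim Y$ by $n-r$. The only difference is that you work out by hand the transfer of properness and semi-simplicity to the factor $Y$, which the paper delegates to \cite[Propositions~II.6.9 and II.6.10(4)]{BH}; your arguments for these steps are sound.
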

\begin{proof}
 By the Flat Torus Theorem \cite[Theorem~II.7.1]{BH} there is an $N_G(H)$--invariant convex subspace $Z \subset X$, such that $Z$ is isometric to the product $Y \times \mathbb{R}^r$ and the action of $H \subset N_G(H) $ on $Z$ is trivial on the $Y$ factor and free by translations on the $\mathbb{R}^r$ factor. It follows from \cite[Proposition~II.6.9 and Proposition~II.6.10(4)]{BH} that $N_G(H)/H$ acts properly by semi-simple isometries on $Y$. Since $Y$ is a convex subspace of $X$ we get that $Y$ with the induced metric is a $\cze$ space. Moreover, we have $\mathrm{dim}(Y \times \mathbb{R}^r) \leqslant \mathrm{dim}(X)=n$ and therefore we conclude that $\mathrm{dim}(Y) \leqslant n-r$.
\end{proof}

\begin{lemma}\label{lem:cat0quotientandsmaller}Let $[H] \in  [\vab{r} \setminus \vab{r-1}]$ for some $r\geq 1$. Then we have \[\cd{\mathcal{ALL}[H]}N_G[H] \leqslant n-r+1 \text{ and } \cd{\mathcal{ALL}[H] \cap \vab{r-1}}N_G[H] \leqslant n+r.\]
\end{lemma}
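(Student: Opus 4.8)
The plan is to treat the two bounds separately, since they rest on genuinely different ideas. For the first bound, $\cd{\mathcal{ALL}[H]}{N_G[H]} \leqslant n-r+1$, I would exploit condition~\hyperref[cond:c]{$\conc$} to reduce the study of the commensurator $N_G[H]$ to that of a normaliser, where the Flat Torus Theorem of Lemma~\ref{lem:ftt} applies. Concretely, fix a representative $H \cong \mathbb{Z}^r$ (after passing to a finite-index subgroup we may assume $H$ is free abelian of rank $r$, which does not change the commensurator nor the family $\mathcal{ALL}[H]$). For any finitely generated subgroup $K \subset N_G[H]$, condition~\hyperref[cond:c]{$\conc$} supplies $H' \sim H$ with $\langle K, H\rangle \subset N_G(H')$. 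By Lemma~\ref{lem:ftt} the quotient $N_G(H')/H'$ acts properly by semi-simple isometries on a complete proper $\cze$ space of dimension at most $n-r$, so by Lemma~\ref{lem:dietnans} its finite-subgroup family has cohomological dimension at most $n-r$. The key observation is that, just as in the proof of Proposition~\ref{prop:dimvab}, a model for $E_{\vab{0}}(N_G(H')/H')$ pulled back along $N_G(H') \to N_G(H')/H'$ is a model for the restriction of $\mathcal{ALL}[H]$ to $N_G(H')$, because a subgroup of $N_G(H')$ lies in $\mathcal{ALL}[H]$ exactly when its image in the quotient is finite. This bounds $\cd{\mathcal{ALL}[H]}{\langle K,H\rangle} \leqslant n-r$ uniformly over all finitely generated $K$. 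Finally I would invoke Lemma~\ref{lem:modellimit}.\hyperref[cond:cd]{(cd)} (whose hypothesis — every element of $\mathcal{ALL}[H]$ sits in a finitely generated subgroup, namely one commensurable with $H$ — is met) to pass from finitely generated subgroups to all of $N_G[H]$, at the cost of adding $1$, giving $n-r+1$.

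For the second bound, $\cd{\mathcal{ALL}[H] \cap \vab{r-1}}{N_G[H]} \leqslant n+r$, I would apply Lemma~\ref{lem:passingtosmaller}.\hyperref[cond:cd]{(cd)} to the inclusion of families $\mathcal{ALL}[H] \cap \vab{r-1} \subset \mathcal{ALL}[H]$, exactly as in the virtually abelian case. This requires estimating $\cd{\vab{r-1}}{F}$ for each $F \in \mathcal{ALL}[H]$. Every such $F$ is virtually $\mathbb{Z}^s$ for some $s \leqslant r$: if $s \leqslant r-1$ then $F \in \vab{r-1}$ and the bound is $0$, while if $s = r$ then $F$ is itself finitely generated virtually abelian of rank $r$, so Proposition~\ref{prop:dimvab} (via $\cd{} \leqslant \gd{}$, or its cd-analogue) gives $\cd{\vab{r-1}}{F} \leqslant r + (r-1) = 2r-1$. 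Combining the uniform bound $d = 2r-1$ with the first part through Lemma~\ref{lem:passingtosmaller}.\hyperref[cond:cd]{(cd)} yields
\[
\cd{\mathcal{ALL}[H] \cap \vab{r-1}}{N_G[H]} \leqslant \cd{\mathcal{ALL}[H]}{N_G[H]} + (2r-1) \leqslant (n-r+1) + (2r-1) = n+r.
\]

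The main obstacle I expect is the first bound, and specifically making the reduction from the commensurator to a normaliser rigorous while keeping the dimension estimate uniform. Condition~\hyperref[cond:c]{$\conc$} only controls finitely generated pieces of $N_G[H]$, so the verification genuinely needs the limiting Lemma~\ref{lem:modellimit}.\hyperref[cond:cd]{(cd)}; the subtlety is to check that the subgroup $H'$ produced by $\conc$ is again free abelian of rank $r$ (so that Lemma~\ref{lem:ftt} applies verbatim) and that the identification of $\mathcal{ALL}[H]$ with the pullback of the finite-subgroup family survives the passage to the subgroup $\langle K, H\rangle$. The second bound, by contrast, is essentially bookkeeping once Proposition~\ref{prop:dimvab} is in hand.
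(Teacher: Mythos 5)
Your proposal is correct and follows essentially the same route as the paper: condition~\hyperref[cond:c]{$\conc$} plus Lemma~\ref{lem:ftt}, Lemma~\ref{lem:dietnans} and the pullback identification along $N_G(H')\to N_G(H')/H'$ for the first bound (with Lemma~\ref{lem:modellimit}.\hyperref[cond:cd]{(cd)} supplying the $+1$), and Lemma~\ref{lem:passingtosmaller}.\hyperref[cond:cd]{(cd)} with Proposition~\ref{prop:dimvab} for the second. The only cosmetic difference is that the paper makes the restriction step from $N_G(H')$ down to $\langle K,H\rangle$ and $K$ explicit via Shapiro's lemma and cites the cohomological version of the pullback statement directly, which settles the subtleties you flagged.
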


\begin{proof} For the first inequality, in light of Lemma~\ref{lem:modellimit}.\hyperref[cond:cd]{(cd)} (see Remark~\ref{rem:cdvsgd}) it is enough to show that for every finitely generated subgroup $K \subset N_G[H]$ with $H\subset K$ we have $\cd{\mathcal{ALL}[H] \cap K }K \leqslant n-r$.  Let $K \subset N_G[H]$ be such a subgroup. Since $G$ satisfies Condition~\hyperref[cond:c]{$\conc$}, we get that there exists a  subgroup $H' \cong \mathbb{Z}^r$ in $G$ such that $H' \sim H$ and $\langle K, H \rangle$ is a subgroup of the normaliser $N_G(H')$. Since by Shapiro's lemma (see \cite{DegPe}) we have $ \cd{\mathcal{ALL}[H] \cap K }K \leqslant  \cd{\mathcal{ALL}[H] \cap \langle K, H \rangle } \langle K, H \rangle \leqslant \cd{\mathcal{ALL}[H] \cap N_G(H')}N_G(H')$, it is enough to show that \[\cd{\mathcal{ALL}[H] \cap N_G(H')}N_G(H') \leqslant n-r.\] 

For this, we will use the fact that \[\cd{\mathcal{ALL}[H] \cap N_G(H')}N_G(H') \leqslant \cd{\vab{0}} {N_G(H')/H'}\]  \cite[Corollary 8.2.4]{Degthesis}, which is the cohomological analogue of the fact that any model for $E_{\vab{0}}N_G(H')/H'$ is a model for $E_{\mathcal{ALL}[H] \cap N_G(H')}N_G(H')$ via the projection $N_G(H') \to N_G(H')/H'$.

By Lemma~\ref{lem:ftt} the quotient $N_G(H')/H'$ acts properly by semi-simple isometries on a complete proper $\cze$ space of dimension at most $n-r$. Thus applying Lemma~\ref{lem:dietnans} we get that $\cd{\vab{0}}{N_G(H')/H'}\leqslant n-r$. This finishes the proof of the first inequality.\medskip

For the second inequality we would like to use Lemma~\ref{lem:passingtosmaller}.\hyperref[cond:cd]{(cd)} applied to the inclusion of families $\mathcal{ALL}[H] \cap \vab{r-1}\subset \mathcal{ALL}[H]$.  For this we need to bound $\cd{\vab{r-1}}{F}$ for every $F \in \mathcal{ALL}[H]$. If $F$ has rank  at most $r-1$ then $\cd{\vab{r-1}}{F}=0$. If $F$ has rank $r$ then by Proposition~\ref{prop:dimvab} we have $\gd{\vab{r-1}}{F} \leqslant r+r-1$ and thus also $\cd{\vab{r-1}}{F} \leqslant r+r-1$. By the first inequality and Lemma~\ref{lem:passingtosmaller}.\hyperref[cond:cd]{(cd)} we get that   \[\cd{\mathcal{ALL}[H] \cap \vab{r-1}}N_G[H] \leqslant (n-r+1)+(r+r-1)=n+r.\qedhere\]
\end{proof}

\begin{proof}[Proof of Theorem~\ref{thm:mainthm}]
We proceed by induction on $r$. For $r=0$ by Lemma~\ref{lem:dietnans} we have $\cd{\vab{0}}{G}\leqslant n \leqslant n+1$. Now assume inductively that we have 
\[\cd{\vab{r-1}}{G}\leqslant n+r,\] and consider the inclusion of families $\vab{r-1} \subset \vab{r}$. We will show that $\cd{\vab{r}}{G}\leqslant n+r+1$. By Corollary~\ref{cor:dimmodel}.\hyperref[cond:cd]{(cd)} it is enough to show that for any $[H] \in [\vab{r} \setminus \vab{r-1}]$ we have $\mathrm{gd}_{\vab{r}[H]}N_G[H]\leqslant n+r+1$. By Lemma~\ref{lem:modelunionoffam}.\hyperref[cond:cd]{(cd)} we have 
\begin{align}\label{eq:cat0union}
\begin{split}
\cd{\vab{r}[H]}{N_G[H]} \leqslant    \mathrm{max}\{&\cd{\mathcal{ALL}[H]}{N_G[H]},\cd{\vab{r-1}\cap N_G[H]}{N_G[H]},\\& \cd{ALL[H]\cap \vab{r-1} }{N_G[H]}+1 \}
\end{split}
\end{align}
By inductive assumption we have $\cd{\vab{r-1}\cap N_G[H]}{N_G[H]} \leqslant n+r$. By Lemma~\ref{lem:cat0quotientandsmaller} we get $\cd{\mathcal{ALL}[H]}{N_G[H]} \leqslant n-r+1$ and $\cd{ALL[H]\cap \vab{r-1} 
}{N_G[H]} \leqslant n+r$. Plugging these inequalities into \eqref{eq:cat0union} we conclude that $\cd{\vab{r}[H]}{N_G[H]} \leqslant n+r+1$.\end{proof}

\begin{remark}In certain cases the dimension bounds in Theorem~\ref{thm:mainthm} can be improved. Namely, as stated in Lemma~\ref{lem:dietnans}, if $r=0$ then $\cd{\vab{0}}{G}\leqslant n$. Also, if $r=1$ then by \cite[Corollary~3(i)]{DegPe} we have $\cd{\vab{0}}{G}\leqslant n+1$. 
\end{remark}

\section{Applications}\label{sec:applications}

In this section we discuss applications of Theorem~\ref{thm:mainthm}. In particular, we derive Corollary~\ref{cor:introcor}.

\begin{lemma}\label{lem:conditionc}Let $G$ be a group which is linear over $\mathbb{Z}$. Then $G$ satisfies Condition~\hyperref[cond:c]{$\conc$}.
\end{lemma}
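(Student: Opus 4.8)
The goal is to establish Condition~\hyperref[cond:c]{$\conc$} for a group $G$ that is linear over $\mathbb{Z}$. Recall what must be shown: for every $H \in \vab{r}$ and every finitely generated subgroup $K \subset N_G[H]$ (the commensurator), there exists $H' \in \vab{r}$ with $H \sim H'$ such that $\langle K, H \rangle \subset N_G(H')$. The plan is to reduce the problem to the free abelian part of $H$ and then exploit the fact that the commensurator of a free abelian subgroup inside a $\mathbb{Z}$-linear group acts on the associated rational subspace, so that a suitable commensurable conjugate can be made genuinely normal by passing to a subgroup stabilised by the action.

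First I would set up notation. Since $H \in \vab{r}$ is finitely generated virtually abelian, it contains a finite-index subgroup $A \cong \mathbb{Z}^r$; one checks that $\langle K, H \rangle$ normalises $H'$ if and only if it normalises a commensurable free abelian subgroup, so it suffices to work with $A$ in place of $H$. Let $L = \langle K, A \rangle \subset N_G[H] = N_G[A]$, which is again finitely generated. The key point is that $L$ lies in the commensurator of $A$: every $g \in L$ satisfies $\mathrm{rk}(g^{-1}Ag \cap A) = r$, so $g^{-1}Ag$ and $A$ span the same $\mathbb{Q}$-vector space inside $A \otimes \mathbb{Q}$.

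Next I would use $\mathbb{Z}$-linearity. Embed $G \hookrightarrow \mathrm{GL}_m(\mathbb{Z})$ and consider the commensurable closure of $A$ in $L$: since each conjugate $g^{-1}Ag$ for $g \in L$ is commensurable with $A$, the subgroup $\widehat{A} = \{ x \in L \mid x \text{ commensurates } A \text{ and some power lies in the } \mathbb{Q}\text{-span of } A\}$, more concretely the set of elements of $L$ that fix the rational subspace $V = A \otimes \mathbb{Q}$ pointwise after a suitable identification, forms a normal abelian subgroup. The cleanest route is: the $L$-conjugates of $A$ all live in the finite-dimensional $\mathbb{Q}$-vector space $V \subset \mathbb{Z}^m \otimes \mathbb{Q}$ that $A$ spans, and $L$ acts on $V$ permuting a discrete set of lattices. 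I would then define $H'$ to be the intersection $\bigcap_{g \in S} g^{-1} A g$ over a finite generating-related set $S$, or equivalently the maximal $L$-invariant free abelian subgroup of $V \cap G$ commensurable with $A$. Because $L$ permutes the finitely many commensurable lattices in $V$ via its action, and because $A$ has finite index in each conjugate, this intersection is again $\cong \mathbb{Z}^r$, is commensurable with $A$ (hence $H' \sim H$ and $H' \in \vab{r}$), and is by construction normalised by all of $L$, so $L = \langle K, A \rangle \subset N_G(H')$, which gives $\langle K, H \rangle \subset N_G(H')$ after absorbing the finite part.

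The main obstacle I anticipate is verifying that the relevant intersection of conjugates is genuinely normalised by all of $L$ and remains of full rank $r$ --- that is, controlling why $\mathbb{Z}$-linearity forces the orbit of $A$ under $L$-conjugation to be finite (or to stabilise a common finite-index subgroup). This is where linearity is essential rather than cosmetic: in a general $\cze$ group the commensurator action on $V$ could fail to preserve a lattice, but for subgroups of $\mathrm{GL}_m(\mathbb{Z})$ the commensurating action lands in $\mathrm{GL}(V) \cap (\text{arithmetic constraints})$, and the stabiliser of a $\mathbb{Z}$-lattice up to commensurability is constrained enough that a common invariant finite-index sublattice exists. I would likely invoke a standard fact that in a $\mathbb{Z}$-linear group the commensurator of $\mathbb{Z}^r$ relative to a finitely generated subgroup acts on $V$ with image preserving a lattice up to finite index, so that passing to the (finite) intersection of the $L$-orbit of $A$ yields the required normal $H'$. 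Once that finiteness is in hand, the rest is the routine lattice bookkeeping sketched above.
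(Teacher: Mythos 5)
There is a genuine gap, and it sits exactly where you flag it yourself: the claim that the conjugation action of $L=\langle K,H\rangle$ on the commensurability class of $A$ stabilises a common finite-index sublattice is not a ``standard fact'' you can invoke --- it is the entire content of the lemma. Indeed, the paper points out that condition~$\conc$ fails for some $\cze$ groups (the Leary--Minasyan examples), where a finitely generated group commensurates a copy of $\mathbb{Z}^2$ without normalising any finite-index subgroup of it; so no soft argument about ``arithmetic constraints'' can close this step, and $\mathbb{Z}$-linearity has to enter through a real theorem. Your concrete mechanism is also off in a couple of places: $A$ is a multiplicative subgroup of $\mathrm{GL}_m(\mathbb{Z})$, not an additive lattice in $\mathbb{Z}^m\otimes\mathbb{Q}$, so ``the $\mathbb{Q}$-subspace $V$ that $A$ spans'' and ``$L$ permutes lattices in $V$'' do not parse without substantial repair (e.g.\ passing to the Zariski closure of $A$ and its cocharacter/unipotent data, which is a nontrivial detour); the set of subgroups commensurable with $A$ is infinite, so there is no ``finite set of commensurable lattices'' to permute; and the intersection $\bigcap_{g\in L} g^{-1}Ag$ of an infinite family of finite-index subgroups can have infinite index (compare $\bigcap_n 2^n\mathbb{Z}=\{1\}$), while the intersection over only a finite generating set need not be $L$-invariant. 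So the construction of $H'$ does not go through as written.

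The paper closes this gap by citing two results rather than by lattice bookkeeping: since $\langle K,H\rangle$ is a finitely generated group linear over $\mathbb{Z}$, the (virtually) abelian subgroup $H$ is \emph{separable} in $\langle K,H\rangle$ by a theorem of Segal, and then a theorem of Caprace--Kropholler--Reid--Wesolek says that a separable subgroup which is commensurated by the ambient group (which $H$ is, since $K\subset N_G[H]$) contains a finite-index subgroup $H'$ that is genuinely normal in $\langle K,H\rangle$. That $H'$ has finite index in $H$ gives $H'\sim H$ and $H'\in\vab{r}$, which is condition~$\conc$. If you want to rescue your approach you would in effect have to reprove the conjunction of these two statements; as it stands, the decisive step is asserted, not proved.
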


\begin{proof}Take an arbitrary subgroup $H \in \vab{r}$. By replacing $H$ with its finite-index abelian subgroup if necessary, we can assume that $H$ is abelian. Consider a finitely generated subgroup $K \subset N_G[H]$. Now since $G$ is linear, so is the subgroup $\langle K , H \rangle$. By \cite[Theorem 4.C.5]{Segal} the group $H$ is separable in $\langle K , H \rangle$. By \cite[Corollary~7.(i)]{CKRW} there exists a finite index subgroup $H' \subset H$ which is normal in $\langle K , H \rangle$.\end{proof}

We are ready to prove now Corollary~\ref{cor:introcor}.

\begin{proof}[Proof of Corollary~\ref{cor:introcor}]
The corollary follows immediately from Theorem~\ref{thm:mainthm} and Lemma~\ref{lem:conditionc}.
\end{proof}

Below we present a list of groups to which Corollary~\ref{cor:introcor} applies.
\begin{enumerate}

\item \label{it:1} Let $(W, S)$ be a Coxeter group generated by a finite set $S$. Then $W$ acts properly cocompactly by isometries on Davis complex $\Sigma_W$, which is a complete proper $\cze$ space \cite{Davbook}. It is well known that Coxeter groups are linear (see \cite{Davbook}).

\item \label{it:2} Let $A_L$ be a Right-angled Artin group defined by a finite graph $L$. Then $A_L$ acts freely and properly cocompactly by isometries on the universal cover of the Salvetti complex, which is a complete proper $\cze$ space (cube complex) of dimension equal to the size of the largest clique in $L$. The group $A_L$ is linear (see \cite[Corollary~3.6]{HsuWis}). 

\item \label{it:3} Let $G_L$ be a graph product of finite groups over a finite graph $L$. Then $G_L$ acts properly cocompactly by isometries on a right-angled building, which is  a complete proper $\cze$ space of dimension equal to the size of the largest clique in $L$. The group $G_L$ is linear by \cite[Corollary 3.5]{HsuWis}.

\item \label{it:4} Let $X$ be a compact special cube complex (see \cite{HaWis}). Then $\pi_1(X)$ acts freely and properly cocompactly by isometries on the universal cover $\widetilde{X}$, which is a complete proper $\cze$ cube complex. By \cite[Theorem~1.1]{HaWis} the group $\pi_1(X)$ is linear.
\end{enumerate}

We remark that linearity of groups in (\ref{it:2}) and (\ref{it:3}) is proven by embedding these groups into Coxeter groups. There are many more groups which embed into Coxeter groups. Since all the assumptions of Theorem~\ref{thm:mainthm} pass to subgroups, the Theorem holds for these groups as well. The advantage of classes $A_L$ and $G_L$ above is that, in both cases, one has a naturally assigned $\cze$ space whose dimension depends only on $L$. 

On the other hand, there are $\cze$ groups which do not satisfy condition~~\hyperref[cond:c]{$\conc$} \cite{LM}. In a joint work with J.\ Huang \cite{HuPr}, we show that condition~~\hyperref[cond:c]{$\conc$} is satisfied by several classes of $\cze$ groups, including $\cze$ cubical groups and fundamental groups of non-positively curved Riemannian manifolds. We conclude with the following question. 

\begin{question}Does every biautomatic $\cze$ group satisfy condition~\hyperref[cond:c]{$\conc$}?
\end{question}

\begin{bibdiv}

\begin{biblist}
	
\bib{BH}{book}{
	author={Bridson, M.},
	author={Haefliger, A.},
	title={Metric spaces of non-positive curvature},
	series={Grundlehren der Mathematischen Wissenschaften},
	volume={319},
	publisher={Springer-Verlag, Berlin},
	date={1999},
	pages={xxii+643},
	isbn={3-540-64324-9},
	review={\MR{1744486}},
}

 \bib{CKRW}{article}{
    title     ={On the residual and profinite closures of commensurated subgroups
},
    author    ={Caprace, Pierre-Emmanuel },
    author    ={Kropholler, Peter H. },

    author    ={Reid, Colin D. },
    author    ={Wesolek, Phillip },
    status    ={preprint},
    date      ={2017},
    eprint    ={https://arxiv.org/abs/1706.06853},
}

\bib{CMNP}{article}{
   author={Corob Cook, Ged},
   author={Moreno, Victor},
   author={Nucinkis, Brita},
   author={Pasini, Federico W.},
   title={On the dimension of classifying spaces for families of abelian
   subgroups},
   journal={Homology Homotopy Appl.},
   volume={19},
   date={2017},
   number={2},
   pages={83--87},
   issn={1532-0073},
   review={\MR{3694094}},
}

\bib{Davbook}{book}{
   author={Davis, Michael W.},
   title={The geometry and topology of Coxeter groups},
   series={London Mathematical Society Monographs Series},
   volume={32},
   publisher={Princeton University Press, Princeton, NJ},
   date={2008},
   pages={xvi+584},
   isbn={978-0-691-13138-2},
   isbn={0-691-13138-4},
   review={\MR{2360474}},
}

 \bib{Degthesis}{thesis}{
     title={Cohomology of split extensions and classifying spaces for families of subgroups},
     author={Degrijse, Dieter},
     type={PhD thesis},
     date={2013},
    }

\bib{DegPe}{article}{
   author={Degrijse, Dieter},
   author={Petrosyan, Nansen},
   title={Bredon cohomological dimensions for groups acting on $\rm
   CAT(0)$-spaces},
   journal={Groups Geom. Dyn.},
   volume={9},
   date={2015},
   number={4},
   pages={1231--1265},
   issn={1661-7207},
   review={\MR{3428413}},
}

\bib{Gui}{collection}{
	label={Cha08},
	title={Guido's book of conjectures},
	series={Monographies de L'Enseignement Math\'ematique},
	volume={40},
	note={A gift to Guido Mislin on the occasion of his retirement from ETHZ
		June 2006;
		Collected by I. Chatterji},
	publisher={L'Enseignement Math\'ematique, Geneva},
	date={2008},
	pages={189},
	isbn={2-940264-07-4},
	review={\MR{2499538}},
}

\bib{HaWis}{article}{
   author={Haglund, F.},
   author={Wise, D. T.},
   title={Special cube complexes},
   journal={Geom. Funct. Anal.},
   volume={17},
   date={2008},
   pages={1551–1620},
   issn={0026-2285},
   review={\MR{2377497}},
}

\bib{HsuWis}{article}{
   author={Hsu, Tim},
   author={Wise, Daniel T.},
   title={On linear and residual properties of graph products},
   journal={Michigan Math. J.},
   volume={46},
   date={1999},
   number={2},
   pages={251--259},
   issn={0026-2285},
   review={\MR{1704150}},
}

   \bib{HuPr}{article}{
   journal={in preparation},
    author={Huang, Jingyin},
   author={Prytu{\l}a, Tomasz},
 
}

   \bib{LM}{article}{
   journal={in preparation},
    author={Leary, Ian J.},
   author={Minasyan, Ashot},
 
}

\bib{Lucze}{article}{
   author={L{\"u}ck, W.},
  title={On the classifying space of the family of virtually cyclic
   subgroups for $\rm CAT(0)$-groups},
   journal={M{\"u}nster J. of Math.},
   volume={2},
   date={2009},
   pages={201--214},

}

\bib{Lusurv}{article}{
	author={L{\"u}ck, W.},
	title={Survey on classifying spaces for families of subgroups},
	conference={
		title={Infinite groups: geometric, combinatorial and dynamical
			aspects},
	},
	book={
		series={Progr. Math.},
		volume={248},
		publisher={Birkh\"auser, Basel},
	},
	date={2005},
	pages={269--322},
	review={\MR{2195456}},
}

\bib{luckbook}{book}{
    AUTHOR = {L{\"u}ck, W.},
     TITLE = {Transformation groups and algebraic {$K$}-theory},
    SERIES = {Lecture Notes in Mathematics},
    VOLUME = {1408},
      NOTE = {Mathematica Gottingensis},
 PUBLISHER = {Springer-Verlag, Berlin},
      YEAR = {1989},
}

\bib{LuMe}{article}{
   author={L\"uck, Wolfgang},
   author={Meintrup, David},
   title={On the universal space for group actions with compact isotropy},
   conference={
      title={Geometry and topology: Aarhus (1998)},
   },
   book={
      series={Contemp. Math.},
      volume={258},
      publisher={Amer. Math. Soc., Providence, RI},
   },
   date={2000},
   pages={293--305},
   review={\MR{1778113}},
}

\bib{LuWe}{article}{
   author={L{\"u}ck, W.},
   author={Weiermann, M.},
   title={On the classifying space of the family of virtually cyclic
   subgroups},
   journal={Pure Appl. Math. Q.},
   volume={8},
   date={2012},
   number={2},
   pages={497--555},
   issn={1558-8599},
   review={\MR{2900176}},
}
  
    \bib{VM}{thesis}{
     type={PhD thesis},
     author={Moreno, Victor},
     status={submitted},
     date={2018},
    }

\bib{OsaPry}{article}{
    title     ={Classifying spaces for families of subgroups for systolic groups},
    author    ={Osajda,Damian},
    author    ={Prytu{\l}a, Tomasz},
    journal={Groups Geom. Dyn.},
    volume={12},
    date={2018},
    number={3},
    pages={1005--1060},
    issn={1661-7207},

}

\bib{Segal}{book}{
   author={Segal, Daniel},
   title={Polycyclic groups},
   series={Cambridge Tracts in Mathematics},
   volume={82},
   publisher={Cambridge University Press, Cambridge},
   date={1983},
   pages={xiv+289},
   isbn={0-521-24146-4},
   review={\MR{713786}},
}

\end{biblist}
\end{bibdiv}
\end{document}